\RequirePackage{amsmath}
\documentclass{llncs}

\RequirePackage{etex} 
\usepackage{graphicx}
%

\usepackage[colorlinks,linktocpage]{hyperref}
\hypersetup{
   colorlinks   = true, 
   urlcolor     = blue, 
   linkcolor    = blue, 
   citecolor    = green 
}

\usepackage{microtype}

\usepackage[ruled,vlined,linesnumbered]{algorithm2e}
\usepackage{listings}
\usepackage{color}

\usepackage{float} 
\restylefloat{figure}

\usepackage{tikz}
\usetikzlibrary{positioning}
\usepackage{array}

\usepackage{pifont}
\usepackage{mathtools}
\usepackage{url}
\usepackage{multicol}
\usepackage{multirow}
\usepackage{systeme}
\usepackage{bbm}
\usepackage{minitoc}
\usepackage{bm}
\usepackage{amssymb}
\usepackage{graphicx}
\usepackage{multirow}
\usepackage{textcomp} 
\usepackage{enumerate}

\usepackage{fancyhdr}
\usepackage{esvect}

\usepackage[
	lambda,
	operators,
	advantage,
	sets,
	adversary,
	landau,
	probability,
	notions,	
	logic,
	ff,
	mm,
	primitives,
	events,
	complexity,
	asymptotics,
	keys]{cryptocode}

\usepackage{lineno}


\newcommand{\lb}{\mathsf{L}} 
\newcommand{\ub}{\mathsf{U}} 
\newcommand{\wbs}{\mathsf{w}} 
\newcommand{\std}{\mathsf{n}} 

\newcommand{\salsacore}{\mathsf{Salsa}}

\newcommand{\chachaperm}{\mathsf{ChaCha}_\pi}
\newcommand{\quarterround}{\mathcal{Q}}
\newcommand{\round}{\mathcal{R}}
\newcommand{\columnround}{\mathcal{R}^\mathsf{C}}
\newcommand{\diagonalround}{\mathcal{R}^\mathsf{D}}
\newcommand{\oracle}{\mathsf{Oracle}}

\newcommand{\qrprob}{\mathfrak{p}} 

\newcommand{\rr}{\mathsf{r}}



\newcommand{\rot}[2][]{\overleftarrow{#2}^{#1}}
\newcommand{\prot}[2][]{\overleftarrow{\overleftarrow{#2}}^{#1}} 

\begin{document}       
	
	\title{
		Rotational analysis of ChaCha permutation
	}
	\titlerunning{
	Rotational analysis of ChaCha permutation
	}
	\authorrunning{
	S.~Barbero et al.
	}
	\author{
	    Stefano Barbero \inst{1} \and
	    Emanuele Bellini\inst{2} \and
	    Rusydi Makarim \inst{2}
	}
	\institute{
		 Politecnico di Torino, Italy
		 \and
		 Cryptography Research Centre, Technology Innovation Institute, UAE
		}	
	\maketitle

\begin{abstract}
   We show that the underlying permutation of ChaCha20 stream cipher 
   does not behave as a random permutation for up to 17 rounds 
   with respect to rotational cryptanalysis. 
   In particular, we derive a lower and an upper bound 
   for the rotational probability 
   through ChaCha quarter round, 
   we show how to extend the bound to a full round and then 
   to the full permutation. 
   The obtained bounds show that 
   the probability to find what we call 
   a parallel rotational collision is, for example, 
   less than $2^{-488}$ for 17 rounds of ChaCha permutation, while
   for a random permutation of the same input size, 
   this probability is $2^{-511}$.
   We remark that our distinguisher is not 
   an attack to ChaCha20 stream cipher, 
   but rather a theoretical analysis of its internal permutation 
   from the point of view of rotational cryptanalysis.
    \keywords{ChaCha20 \and Stream Cipher \and Rotational cryptanalysis \and Permutation \and Distinguisher}
\end{abstract}

\setcounter{tocdepth}{4}
\tableofcontents

\section{Introduction}
\label{sec:introduction}

Salsa20 \cite{bernstein2008salsa20} and ChaCha20 \cite{bernstein2008chacha} 
are two closely related stream ciphers developed by Daniel J. Bernstein. 
Salsa20, the original cipher, was designed in 2005, 
then later submitted 
to the eSTREAM project by Daniel J. Bernstein 
\cite{bernstein2005salsa20techreport}.
Its detailed specification can be found in 
\cite{bernstein2005salsa20techreport}.
ChaCha20 is a modification of Salsa20, 
published by Bernstein in 2008, 
aimed at increasing diffusion and performance on some architectures. 
Google has selected ChaCha20 along with 
Bernstein’s Poly1305 message authentication code 
as a replacement for RC4 in TLS, 
and its specifications can be found in \cite{DBLP:journals/rfc/rfc8439}.
Both ciphers are ARX (Add-Rotate-Xor) ciphers, i.e.  
built on a pseudorandom function 
based only on the following three operations:
32-bit modular addition, 
circular rotation, and
bitwise exclusive or (XOR).
This pseudorandom function is itself built 
upon a 512 bit permutation.
According to \cite{bernstein2015sphincs},
both permutations are not designed to simulate ideal permutations: 
they are designed to simulate ideal permutations with certain symmetries, 
i.e., 
ideal permutations of the orbits of the state space under these symmetries. 
The input of the Salsa and ChaCha function
is partially fixed to specific asymmetric constants, 
guaranteeing that different inputs lie in different orbits.
To our knowledge, while for Salsa 
some of these properties of "non-pseudorandomness" are well known, 
this is not the case for ChaCha (see Section \autoref{sec:related_works}).
Again to our knowledge, 
because of the use of asymmetric constants injected 
into the input state of the permutation, 
none of these properties can be used to attack the entire stream cipher, or 
other ciphers where these permutations have been reused, as 
Salsa20 permutation in the Rumba20 compression function
\cite{bernstein2007output}, 
a tweaked version of the ChaCha20 permutation 
in the BLAKE and BLAKE2 hash functions
\cite{aumasson2013blake2}, 
or ChaCha12 permutation in the original SPHINCS post-quantum signature scheme
\cite{bernstein2015sphincs} \footnote{The current SPHINCS submission to the NIST post-quantum standardization process does not use ChaCha anymore.}.

That said, studying mathematical properties of the Salsa and ChaCha permutations 
is still of theoretical interest, and 
it is useful to understand how these permutations 
can be reused to design other cryptographic primitives.

\subsection{Our contribution}
\label{sec:our_contribution}

In this work,
we show that ChaCha permutation does not behave as a random permutation, 
with respect to rotational cryptanalysis.
To do so, we first derive and formally prove 
a lower and an upper bound for the probability of 
the propagation of rotational pairs through ChaCha quarter round. 
We provide experimental evidence of the correctness of the bounds 
by testing them on a toy version of ChaCha permutation. 
We then show how to extend the bounds to a full round and then to the full permutation. 
The obtained bounds allow us to distinguish
ChaCha permutation, with for example 17 rounds, from a random permutation 
by using $2^{489}$ calls to an oracle 
running either ChaCha permutation or the random permutation. 
To do so, we prove that what we call 
a \emph{parallel rotational collision}, 
is more likely to happen in ChaCha permutation, 
rather than in a random permutation.
For example, 
such a collision happens with probability less than $2^{-488}$ 
for ChaCha permutation with 17 rounds,
while with probability $2^{-511}$ for a random permutation.
This distinguisher is \emph{not} an attack to ChaCha20 stream cipher, 
but rather a theoretical analysis of its permutation from the point of view of rotational cryptanalysis.

\subsection{Outline of the paper}
\label{sec:outline}

In \autoref{sec:related_works} we briefly summarize the existing studies on the core function of Salsa and ChaCha stream ciphers.
In \autoref{sec:chacha_permutation_description}, 
we introduce the notation used throughout this manuscript and recall ChaCha permutation specifications.
In \autoref{sec:rotational_difference_propagation}, 
we derive the lower and upper bound on the probability of 
the propagation of a rotational pair for ChaCha quarter round, 
for the full rounds, and for the full permutation.
In \autoref{sec:distinguisher}, 
we describe a distinguihser exploiting the above mentioned bounds.
Finally, in \autoref{sec:conclusion}, we conclude the manuscript.

\subsection{Related works}
\label{sec:related_works}

Often, rather than only considering the underlying permutation of Salsa and ChaCha, 
researchers study the so called Salsa (or ChaCha) \emph{core} function 
(also called ChaCha \emph{block function} in \cite{DBLP:journals/rfc/rfc8439}), 
whose output consists in applying the permutation and then 
xoring the output of the permutation with its input.

Already in the specifications of Salsa20 \cite{bernstein2005salsa20}, 
there is an example showing how the 0 vector is a fixed point for Salsa core function. This is also true for ChaCha.

In \cite{hernandez2008salsa20},
the authors find\footnote{According to the authors and to  \cite{bernsteinresponse}, 
most of these results were already informally observed by
Matt Robshaw in June 2005, and independently posted to sci.crypt by
David Wagner in September 2005, but we could not find any reference besides \cite{googlegroupssalsa20}.}
an invariant for Salsa core main building block, 
the quarterround function, 
that is then extended to the row-round and column-round functions. 
This allows them to find an input subset of size $2^{32}$ 
for which the Salsa20 core behaves exactly as the transformation $f(x)=2x$.
This allows to 
construct $2^{31}$ collisions for any number of rounds. 
They also show a differential characteristic with probability one 
that proves that the Salsa20 core does not have 2nd preimage resistance.
In \cite{bernsteinresponse}, 
it is pointed out that none of the results in \cite{hernandez2008salsa20}
has an impact on security of Salsa20 stream cipher, 
due to the use of fixed constants in the input.
Indeed, 
Salsa20 is not designed to be a collision-resistant compression function
\cite{salsacorewebsite}.

In the Salsa20 security document \cite[Section 4]{bernstein2005salsa20sectechreport}, 
two other symmetries of the cipher are reported, i.e. 
\begin{itemize}
    \item shifting the entire Salsa20 core input
array along the diagonal has exactly the same effect on the output, i.e. 
\begin{align*}
    \begin{bmatrix}
    y_{0,0} & y_{0,1} & y_{0,2} & y_{0,3} \\
    y_{1,0} & y_{1,1} & y_{1,2} & y_{1,3} \\
    y_{2,0} & y_{2,1} & y_{2,2} & y_{2,3} \\
    y_{3,0} & y_{3,1} & y_{3,2} & y_{3,3} \\
    \end{bmatrix}
    &=
    \salsacore
    \left(
    \begin{bmatrix}
    x_{0,0} & x_{0,1} & x_{0,2} & x_{0,3} \\
    x_{1,0} & x_{1,1} & x_{1,2} & x_{1,3} \\
    x_{2,0} & x_{2,1} & x_{2,2} & x_{2,3} \\
    x_{3,0} & x_{3,1} & x_{3,2} & x_{3,3} \\
    \end{bmatrix}
    \right) 
    \\
    \begin{bmatrix}
    y_{3,3} & y_{3,0} & y_{3,1} & y_{3,2} \\
    y_{0,3} & y_{0,0} & y_{0,1} & y_{0,2} \\
    y_{1,3} & y_{1,0} & y_{1,1} & y_{1,2} \\
    y_{2,3} & y_{2,0} & y_{2,1} & y_{2,2} \\
    \end{bmatrix}
    &=
    \salsacore
    \left(
    \begin{bmatrix}
    x_{3,3} & x_{3,0} & x_{3,1} & x_{3,2} \\
    x_{0,3} & x_{0,0} & x_{0,1} & x_{0,2} \\
    x_{1,3} & x_{1,0} & x_{1,1} & x_{1,2} \\
    x_{2,3} & x_{2,0} & x_{2,1} & x_{2,2} \\
    \end{bmatrix}
    \right) \,;
\end{align*}
    \item the Salsa20 core operations are 
          almost compatible with rotation of 
          each input word by, say, 10 bits.
\end{itemize}
This shift and rotation structures
are eliminated by the use of fixed constants in the input diagonal. 
Precisely, the input diagonal is different from 
all its nontrivial shifts and 
all its nontrivial rotations and 
all nontrivial shifts of its nontrivial rotations. 
In other words,
two distinct arrays with this diagonal 
are always in distinct orbits under the shift/rotate group.

We are not aware of similar properties for the case of the ChaCha permutation.
In particular, we are not aware of any study of the rotational properties of the ChaCha permutation.

\section{ChaCha permutation description}
\label{sec:chacha_permutation_description}

In this section, 
we first define our notation, then 
we describe the specifications of ChaCha permutation.
We do not describe the entire details of ChaCha as a stream cipher. 

\subsection{Notation}
\label{sec:notation}

Let $\FF_2$ be the binary field with two elements, 
and $\mathcal{M}_{\std \times \std}(\FF_2^\wbs)$ the set of all $\std \times \std$ matrices with elements in $\FF_2^\wbs$.
We indicate 
with lowercase letters $\wbs$-bit words, i.e.
$x \in \FF_2^\wbs$,  
with bold lower case letters vectors of $\std$ words, i.e.
$\bm{x} \in (\FF_2^\wbs)^\std$, and 
with uppercase letters a $\std \times \std$ matrix of $\std^2$ words, i.e. 
$X \in \mathcal{M}_{\std \times \std}(\FF_2^\wbs)$.

We use the following notation:
\begin{itemize}
\item $\oplus$ for the bitwise exclusive or (XOR), 
      i.e. the addition in $\FF_2^\wbs$; 
\item $\boxplus$ for the $\wbs$-bit addition $\mod 2^{\wbs}$;
\item $\boxplus_{i=1}^{k}a_{i}$ for the $\wbs$-bit addition $\mod 2^{\wbs}$ of $k$   words $a_{1}, \ldots,a_{k}$
\item $\lll \rr$ and $\ggg \rr$ 
      for constant-distance left and right, respectively, 
      circular rotation of $\rr$ bits of a $\wbs$-bit word 
      (with $\wbs > \rr$). 
      When needed, we also use the following more compact notation:
      \begin{itemize}
        \item $\rot[\rr]{x} = x \lll \rr$;
        \item $\prot[\rr]{\bm{x}} = (\rot[\rr]{x_0}, \dots, \rot[\rr]{x_{\std-1}})$ 
              the parallel left circular rotation of a 
              $\std$-word vector 
        \item 
            $
            \prot[\rr]{X} = 
            \begin{bmatrix}
            \rot[\rr]{x_{0,0}} & \ldots & \rot[\rr]{x_{0,\std-1}} \\
            \vdots & \ddots & \vdots \\
            \rot[\rr]{x_{\std-1,0}} & \ldots & \rot[\rr]{x_{\std-1,\std-1}} 
            \end{bmatrix}
            $ 
            the parallel left circular rotation of 
            the $\wbs$-bit elements of the matrix 
            $
            X \in \mathcal{M}_{\std \times \std}(\FF_2^\wbs)
            $.
    \end{itemize}
    When clear from the context, 
    we omit the subscript $\rr$, and simply write
    $\rot{x}$, $\prot{\bm{x}}$, and $\prot{X}$.
\end{itemize}
In the case of ChaCha, 
we have 
$\std = 4$ and 
$\wbs = 32$.

\subsection{ChaCha permutation specification}
\label{sec:chacha_permutation_specs}

ChaCha permutation has a state of 512 bits, which 
can be seen as a $4 \times 4$ matrix whose elements are 
binary vectors of $\wbs = 32$ bits, i.e.
\begin{align*}
    X 
    =
    \{x_{i,j}\}_{\small{\substack{i=0,\ldots,3\\j=0,\ldots,3}}}
    = 
    \begin{bmatrix}
    x_{0,0} & x_{0,1} & x_{0,2} & x_{0,3} \\
    x_{1,0} & x_{1,1} & x_{1,2} & x_{1,3} \\
    x_{2,0} & x_{2,1} & x_{2,2} & x_{2,3} \\
    x_{3,0} & x_{3,1} & x_{3,2} & x_{3,3} \\
    \end{bmatrix}
    \in 
    \mathcal{M}_{\std \times \std}(\FF_2^\wbs)
    \,.
\end{align*}

\begin{definition}[ChaCha quarter round]
Let $x_i, y_i, i=0,1,2,3$ be $\wbs$-bit words, and
let $(y_{0},y_{1},y_{2},y_{3})
= \quarterround(x_0,x_1,x_2,x_3)$, where 
$\quarterround$ is ChaCha quarter round, defined as follows:

\begin{align}
\label{b0} b_{0}&=x_{0}\boxplus x_{1}\\
\label{b3} b_{3}&=\left(b_{0}\oplus x_{3}\right)\lll r_{1}\\
\label{b2} b_{2}&=b_{3}\boxplus x_{2}\\
\label{b1} b_{1}&=\left(b_{2}\oplus x_{1}\right)\lll r_{2}.
\end{align}
and
\begin{align}
\label{y0} y_{0}&=b_{0}\boxplus b_{1}\\
\label{y3} y_{3}&=\left(y_{0}\oplus b_{3}\right)\lll r_{3}\\
\label{y2} y_{2}&=y_{3}\boxplus b_{2}\\
\label{y1} y_{1}&=\left(y_{2}\oplus b_{1}\right)\lll r_{4}
\end{align}
\end{definition}


\begin{figure}
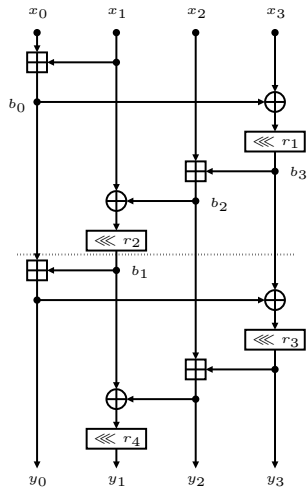

\centering
    \include{chacha_qr}
\caption{The ChaCha quarter round.}
	\label{fig:chachaquarterround}
\end{figure}

We show in Fig. \ref{fig:chachaquarterround} a schematic drawing of the Chacha quarter round. 
The permutation used in ChaCha20 stream cipher performs 20 rounds or, equivalently, 10 \emph{double rounds}.
Two consecutive rounds (or a \emph{double round}) of ChaCha permutation consist in 
applying the quarter round 
four times in parallel to the columns of the state 
(first round), and then
four times in parallel to the diagonals of the state
(second round). More formally:

\begin{definition}[ChaCha column/diagonal round]
Let 
$
X = 
\{x_{i,j}\}_{\small{\substack{i=0,\ldots,3\\j=0,\ldots,3}}}
$
and 
$
Y = 
\{y_{i,j}\}_{\small{\substack{i=0,\ldots,3\\j=0,\ldots,3}}}
$ 
be two matrices in 
$\mathcal{M}_{\std \times \std}(\FF_2^\wbs)$.

A \emph{column round} 
$Y = \columnround(X)$ is defined as follows, 
with $i = 0,1,2,3$:
\begin{align*}
(y_{0,i},y_{1,i},y_{2,i},y_{3,i})
= 
\quarterround(x_{0,i},x_{1,i},x_{2,i},x_{3,i})
\,.
\end{align*}

A \emph{diagonal round} 
$Y = \diagonalround(X)$ is defined 
as follows, 
for $i = 0,1,2,3$ and where each pedix 
is computed modulo $\std=4$:
\begin{align*}
(y_{0,i},y_{1,i+1},y_{2,i+2},y_{3,i+3})
= 
\quarterround(x_{0,i},x_{1,i+1},x_{2,i+2},x_{3,i+3}) 
\,.
\end{align*}
\end{definition}




\section{Propagation of rotational pairs}
\label{sec:rotational_difference_propagation}
In this section, 
we first define a set of necessary and sufficient conditions 
for the propagation of rotational pairs through ChaCha quarter round.
We then use these conditions to derive a lower and an upper bound for the probability of this propagation to happen through the quarter round.
Then, we describe how to extend the bounds to a full round, and 
finally to the full permutation.

\subsection{Conditions for rotational pairs propagation}
\label{sec:rotational_difference_propagation_conditions}

We are interested in studying the probability of 
the propagation through the quarter rounds of
\emph{rotational} pairs, i.e., of 
\begin{equation}
\label{prob}
\qrprob
=
\Pr[ 
(\rot[\rr]{y_{0}}, \rot[\rr]{y_{1}}, \rot[\rr]{y_{2}}, \rot[\rr]{y_{3}})
= 
\quarterround(\rot[\rr]{x_0}, \rot[\rr]{x_1}, \rot[\rr]{x_2}, \rot[\rr]{x_3})
]    
\,.
\end{equation}
To do so, we first prove the following proposition.
\begin{proposition}
Given ChaCha quarter round $\quarterround$ defined as above with  
the non negative integers $r_1, r_2, r_3, r_4 \le \wbs-1$, and 
given the rotational amount $\rr \le \wbs-1$, 
then
\begin{align*}
    (y_{0}\lll \rr,y_{1}\lll \rr,y_{2}\lll \rr,y_{3}\lll \rr)
    & = 
    \quarterround(x_0 \lll \rr, x_1 \lll \rr, x_2 \lll \rr, x_3 \lll \rr)
    \\
    & \iff
    \\
    \left(x_{0}\lll \rr \right)\boxplus\left(x_{1}\lll \rr\right)
    & =
    \left(x_{0}\boxplus x_{1}\right)\lll \rr
    \\
    \left(b_{3}\lll \rr \right)\boxplus\left(x_{2}\lll \rr \right)
    & =
    \left(b_{3}\boxplus x_{2}\right)\lll \rr
    \\
    \left(\left(x_{0}\boxplus x_{1}\right)\lll \rr\right)\boxplus\left(b_{1}\lll \rr\right)
    & = 
    \left(x_{0}\boxplus x_{1}\boxplus b_{1}\right)\lll \rr
    \\
    \left(y_{3}\lll \rr \right)\boxplus\left(\left(b_{3}\boxplus x_{2}\right)\lll \rr\right)&
    =
    \left(y_{3}\boxplus b_{3}\boxplus x_{2}\right)\lll \rr
    \,.    
\end{align*}
\end{proposition}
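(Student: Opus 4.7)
The plan is to exploit the fact that bitwise XOR and circular rotation commute trivially with a further rotation, since $(a \oplus b) \lll \rr = (a \lll \rr) \oplus (b \lll \rr)$ and $(a \lll s) \lll \rr = (a \lll \rr) \lll s$. Modular addition is the only operation in the quarter round for which this commutativity can fail, and the quarter round uses exactly four modular additions---those producing $b_0$, $b_2$, $y_0$, and $y_2$---interleaved with XOR/rotation steps. The four displayed equalities are precisely the addition-level commutation statements for these four additions on their actual operands, so it is natural to prove the biconditional by isolating one addition at a time. Throughout, I denote by $b_i'$ and $y_i'$ the intermediate and output words produced by $\quarterround(x_0 \lll \rr, x_1 \lll \rr, x_2 \lll \rr, x_3 \lll \rr)$.

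For the $(\Leftarrow)$ direction, I would verify by induction, following the definitional order of the quarter round, that $b_0' = b_0 \lll \rr$, then $b_3' = b_3 \lll \rr$, then $b_2' = b_2 \lll \rr$, then $b_1' = b_1 \lll \rr$, and finally that $y_0',y_3',y_2',y_1'$ match their rotated counterparts. The first hypothesis yields $b_0' = b_0 \lll \rr$ immediately. The XOR/rotation step defining $b_3$ propagates this to $b_3' = b_3 \lll \rr$ using only XOR--rotation commutativity. The second hypothesis then yields $b_2' = b_2 \lll \rr$; the XOR/rotation step defining $b_1$ yields $b_1' = b_1 \lll \rr$; the third hypothesis yields $y_0' = y_0 \lll \rr$; the step defining $y_3$ yields $y_3' = y_3 \lll \rr$; the fourth hypothesis yields $y_2' = y_2 \lll \rr$; and the step defining $y_1$ yields $y_1' = y_1 \lll \rr$. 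Each step invokes either XOR--rotation commutativity or exactly one of the four hypotheses.

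The main obstacle is the $(\Rightarrow)$ direction, where the four output equalities must be back-propagated through the invertible operations in a careful order to avoid circular reasoning. The trick is to first extract $b_1' = b_1 \lll \rr$ and $b_3' = b_3 \lll \rr$ directly from pairs of outputs: from $y_1' = (y_2' \oplus b_1') \lll r_4$ and $y_1 \lll \rr = ((y_2 \lll \rr) \oplus (b_1 \lll \rr)) \lll r_4$, inverting the rotation by $r_4$ and using $y_2' = y_2 \lll \rr$ forces $b_1' = b_1 \lll \rr$; analogously, $y_3'$ and $y_0'$ give $b_3' = b_3 \lll \rr$. Back-propagating $b_3' = b_3 \lll \rr$ through the formula $b_3' = (b_0' \oplus (x_3 \lll \rr)) \lll r_1$ forces $b_0' = b_0 \lll \rr$, which rewritten in terms of $x_0,x_1$ is the first hypothesis; symmetrically, back-propagating $b_1' = b_1 \lll \rr$ through the formula for $b_1$ forces $b_2' = b_2 \lll \rr$, and combining with $b_2' = b_3' \boxplus (x_2 \lll \rr)$ gives the second hypothesis. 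The third and fourth hypotheses then drop out of $y_0' = b_0' \boxplus b_1' = y_0 \lll \rr$ and $y_2' = y_3' \boxplus b_2' = y_2 \lll \rr$ using the intermediate equalities just established.
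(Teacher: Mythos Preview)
Your proposal is correct and follows essentially the same route as the paper's proof: in the $(\Rightarrow)$ direction you, like the paper, first extract $b_1'=b_1\lll\rr$ and $b_3'=b_3\lll\rr$ from the output pairs $(y_1',y_2')$ and $(y_3',y_0')$ via invertibility of the XOR/rotation steps, then back-propagate to $b_0'$ and $b_2'$, and finally read off the four addition conditions. The only real difference is presentational: you spell out the $(\Leftarrow)$ direction explicitly by forward induction along the quarter-round definition, whereas the paper leaves sufficiency implicit after deriving the four conditions as equivalent to the output equality.
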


\begin{proof}
Let us consider what happens to the output if, 
instead of the input $(x_{0},x_{1},x_{2},x_{3})$, 
we use the input $(x_{0}\lll \rr,x_{1}\lll \rr,x_{2}\lll \rr,x_{3}\lll \rr)$, where every string is rotated $\rr$ places to the left.
First of all we find 
\begin{align}
\label{b0r}\widetilde{b_{0}}&=\left(x_{0}\lll \rr\right)\boxplus\left(x_{1}\lll \rr\right)\\
\label{b3r}\widetilde{b_{3}}&=\left(\widetilde{b_{0}}\oplus\left(x_{3}\lll \rr\right)\right)\lll r_{1}\\
\label{b2r}\widetilde{b_{2}}&=\widetilde{b_{3}}\boxplus\left(x_{2}\lll \rr\right)\\
\label{b1r}\widetilde{b_{1}}&=\left(\widetilde{b_{2}}\oplus\left(x_{1}\lll \rr\right)\right)\lll r_{2}
\end{align}
and 
\begin{align}
\label{y0r}	\widetilde{y_{0}}&=\widetilde{b_{0}}\boxplus\widetilde{b_{1}}\\
\label{y3r}	\widetilde{y_{3}}&=\left(\widetilde{y_{0}}\oplus\widetilde{b_{3}}\right)\lll r_{3}\\
\label{y2r}	\widetilde{y_{2}}&=\widetilde{y_{3}}\boxplus\widetilde{b_{2}}\\
\label{y1r} \widetilde{y_{1}}&=\left(\widetilde{y_{2}}\oplus\widetilde{b_{1}}\right)\lll r_{4}
\end{align}

Now, the conditions that must be simultaneously fulfilled in order to obtain
\begin{equation}\label{eqoutput}
(y_{0}\lll \rr,y_{1}\lll \rr,y_{2}\lll \rr,y_{3}\lll \rr)
=
(\widetilde{y_{0}},\widetilde{y_{1}},\widetilde{y_{2}},\widetilde{y_{3}}) 
\,,
\end{equation}
are the following:
\begin{align}
\label{y0=y0r}\widetilde{y_{0}}=y_{0}\lll \rr&\Longleftrightarrow\widetilde{b_{0}}\boxplus\widetilde{b_{1}}=\left(b_{0}\boxplus b_{1}\right)\lll \rr\\
\label{y3=y3r}\widetilde{y_{3}}=y_{3}\lll \rr&\Longleftrightarrow\left(\widetilde{y_{0}}\oplus\widetilde{b_{3}}\right)\lll r_{3}=\left(\left(y_{0}\oplus b_{3}\right)\lll r_{3}\right)\lll \rr\\
\label{y2=y2r}\widetilde{y_{2}}=y_{2}\lll \rr&\Longleftrightarrow\widetilde{y_{3}}\boxplus\widetilde{b_{2}}=\left(y_{3}\boxplus b_{2}\right)\lll \rr\\
\label{y1=y1r}\widetilde{y_{1}}=y_{1}\lll \rr&\Longleftrightarrow\left(\widetilde{y_{2}}\oplus\widetilde{b_{1}}\right)\lll r_{4}=\left(\left(y_{2}\oplus b_{1}\right)\lll r_{4}\right)\lll \rr
\end{align}
These constraints can be simplified, 
observing that from (\ref{y3=y3r}), 
considering the condition $\widetilde{y_{0}}=y_{0}\lll \rr$ and 
thanks to the distributive property of bit rotation 
with respect to $\oplus$, we have
\begin{align*}
\left(
\left(y_{0}\lll \rr
\right)
\oplus
\widetilde{b_{3}}\right)\lll r_{3} 
& = \left(\left(y_{0}\oplus b_{3}\right)\lll r_{3}\right)\lll \rr \\
& = \left(\left(y_{0}\lll \rr\right)\oplus\left(b_{3}\lll \rr\right)\right)\lll r_{3}
\,.
\end{align*}
Thus, we must have 
\begin{equation}
\label{b3=b3r}
\widetilde{b_{3}}=b_3\lll \rr
\end{equation}
and from (\ref{y1=y1r}) in an analogous way, using the condition $\widetilde{y_{2}}=y_{2}\lll \rr$, we find that
\begin{equation}\label{b1=b1r}\widetilde{b_{1}}=b_1\lll \rr\end{equation}
must hold.
Now considering (\ref{b3=b3r}) and equalities (\ref{b3}) and (\ref{b3r}) we easily observe that 
$$\left(\widetilde{b_{0}}\lll r_{1}\right)\oplus\left(\left(x_{3}\lll \rr\right)\ll r_{1}\right)=\left(\left(b_{0}\lll \rr\right)\lll r_{1}\right)\oplus\left(\left(x_{3}\lll \rr\right)\lll r_{1}\right)$$
and we find 
\begin{equation}\label{b0r=b0}
\widetilde{b_{0}}=b_0\lll \rr.
\end{equation}
In a similar way, considering (\ref{b1=b1r}) and equalities (\ref{b1}) and (\ref{b1r})  we have
$$\left(\widetilde{b_{2}}\lll r_{2}\right)\oplus\left(\left(x_{1}\lll \rr\right)\ll r_{2}\right)=\left(\left(b_{2}\lll \rr\right)\lll r_{2}\right)\oplus\left(\left(x_{1}\lll \rr\right)\lll r_{2}\right)$$
obtaining
\begin{equation}\label{b2r=b2}
\widetilde{b_{2}}=b_2\lll \rr.
\end{equation}

Thus condition (\ref{eqoutput}) corresponds to the following four conditions 
\begin{align}
\widetilde{b_{0}}&=b_{0}\lll \rr\\
\widetilde{b_{2}}&=b_{2}\lll \rr\\
\left(b_{0}\lll \rr\right)\boxplus\left(b_{1}\lll \rr\right)&=\left(b_{0}\boxplus b_{1}\right)\lll \rr\\
\left(y_{3}\lll \rr\right)\boxplus\left(b_{2}\lll \rr\right)&=\left(y_{3}\boxplus b_{2}\right)\lll \rr
\end{align}

or equivalently

\begin{align}
\label{cond1}\left(x_{0}\lll \rr\right)\boxplus\left(x_{1}\lll \rr\right)&=\left(x_{0}\boxplus x_{1}\right)\lll \rr\\
\label{cond2}\left(b_{3}\lll \rr\right)\boxplus\left(x_{2}\lll \rr\right)&=\left(b_{3}\boxplus x_{2}\right)\lll \rr\\
\label{cond3}\left(\left(x_{0}\boxplus x_{1}\right)\lll \rr\right)\boxplus\left(b_{1}\lll \rr\right)&=\left(x_{0}\boxplus x_{1}\boxplus b_{1}\right)\lll \rr\\
\label{cond4}\left(y_{3}\lll \rr\right)\boxplus\left(\left(b_{3}\boxplus x_{2}\right)\lll \rr\right)&=\left(y_{3}\boxplus b_{3}\boxplus x_{2}\right)\lll \rr
\end{align}
\qed
\end{proof}

\begin{remark}
Before trying to estimate the probability 
$\qrprob$, i.e., 
that all conditions (\ref{cond1}), (\ref{cond2}), (\ref{cond3}) and (\ref{cond4}) simultaneously hold,
we observe that 
the  rotation $r_{4}$ used in ChaCha quarter round function
is \emph{not} involved in \emph{any} of these equations, 
neither implicitly nor explicitly.
\end{remark}

\subsection{Bounds for the quarter round}
\label{sec:upper_lower_bounds_quarter_round}

We recall the result obtained in Corollary 4.12  by Daum 
\cite{DBLP:phd/de/Daum2005} 
on the propagation of 
the rotational probability 
with respect to modular addition.
\begin{proposition}
Let $a$ and $b$ be 
independent and uniformly distributed strings of $\wbs$ bits, and
$1\leq\rr \leq \wbs-1$ an integer.
Then
\begin{equation}
\begin{split}
\label{daum}
D 
&=
\Pr[
(a\lll \rr)\boxplus (b\lll \rr)
=
(a \boxplus b)\lll \rr
]
= \\
&=\frac{1+2^{-(\wbs-\rr)}+2^{-\rr}+2^{-\wbs}}{4}=
\frac{(2^{\rr}+1)(2^{\wbs-\rr}+1)}{2^{\wbs+2}}.
\end{split}
\end{equation}
\end{proposition}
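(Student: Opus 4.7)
The plan is to reduce the equality of rotated sums to a simple carry condition, and then count. First I would split each operand into its high and low blocks with respect to the rotation amount: write $a = a_H \cdot 2^{\wbs-\rr} + a_L$ with $a_H \in \{0,\dots,2^{\rr}-1\}$ holding the top $\rr$ bits and $a_L \in \{0,\dots,2^{\wbs-\rr}-1\}$ holding the bottom $\wbs-\rr$ bits, and similarly for $b$. With this notation $a \lll \rr = a_L \cdot 2^{\rr} + a_H$ (the bottom half of the rotated word is the original top block, and conversely). Independence and uniformity of $a,b$ translates into independence and uniformity of the four blocks $a_L, a_H, b_L, b_H$ on their respective ranges.

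Next I would introduce the two one-bit carries that arise in the two sides of the target equation. For the right-hand side $(a \boxplus b) \lll \rr$, set $c_L = \mathbf{1}[a_L + b_L \geq 2^{\wbs-\rr}]$ (the carry from the bottom block into the top block of $a \boxplus b$); then the high $\wbs-\rr$ bits of $(a \boxplus b) \lll \rr$ are $a_L + b_L \bmod 2^{\wbs-\rr}$ and its low $\rr$ bits are $a_H + b_H + c_L \bmod 2^{\rr}$. For the left-hand side $(a \lll \rr) \boxplus (b \lll \rr)$, set $c_H = \mathbf{1}[a_H + b_H \geq 2^{\rr}]$; the low $\rr$ bits are $a_H + b_H \bmod 2^{\rr}$ and the high $\wbs-\rr$ bits are $a_L + b_L + c_H \bmod 2^{\wbs-\rr}$. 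Equating the two sides bit by bit, the low-$\rr$-bit condition forces $c_L \equiv 0 \pmod{2^{\rr}}$, hence $c_L=0$, and then the top-bit condition collapses to $c_H \equiv 0 \pmod{2^{\wbs-\rr}}$, hence $c_H=0$. Conversely, if both carries vanish, both sides coincide. So the event whose probability we want is exactly $\{c_L = 0\} \cap \{c_H = 0\}$.

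Finally I would compute the two marginal probabilities and invoke independence. Since $c_L$ depends only on $(a_L,b_L)$ and $c_H$ only on $(a_H,b_H)$, and these two pairs are independent, the events factor. A direct count gives
\begin{equation*}
\Pr[c_L = 0] = \frac{\#\{(u,v) : u+v < 2^{\wbs-\rr}\}}{2^{2(\wbs-\rr)}} = \frac{2^{\wbs-\rr}(2^{\wbs-\rr}+1)/2}{2^{2(\wbs-\rr)}} = \frac{1 + 2^{-(\wbs-\rr)}}{2},
\end{equation*}
and the symmetric calculation yields $\Pr[c_H = 0] = (1 + 2^{-\rr})/2$. Multiplying gives the announced value $D = (1 + 2^{-\rr} + 2^{-(\wbs-\rr)} + 2^{-\wbs})/4 = (2^{\rr}+1)(2^{\wbs-\rr}+1)/2^{\wbs+2}$.

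The only delicate step is the carry analysis in the second paragraph: one must check that $c_L$ and $c_H$ really are the \emph{only} two carries in play, and that $c_L=0$ is genuinely necessary before one can simplify the high-bit equation by cancelling $a_L+b_L \bmod 2^{\wbs-\rr}$ with $a_L+b_L$. Everything else (block decomposition, triangular counting of pairs with bounded sum, factoring the joint probability by independence) is routine once the reduction to $\{c_L=c_H=0\}$ is in place.
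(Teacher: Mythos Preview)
Your proof is correct. The paper does not prove this proposition itself; it is simply quoted as Corollary~4.12 of Daum's thesis. Your block-decomposition and carry argument is the standard route and is in fact the $k=2$ specialisation of the method the paper uses to prove its own generalisation to $k$ addends: there each $a_i$ is split into $a_i^L,a_i^R$, and the rotational equality is shown to hold iff the block-sum overflows $m,t$ satisfy $m\equiv 0\pmod{2^{\wbs-\rr}}$ and $t\equiv 0\pmod{2^{\rr}}$; for two addends those overflows are exactly your carries $c_H,c_L$, and the congruences collapse to $c_H=c_L=0$.

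One minor remark on your last paragraph: the two bit-block equations are in fact independent, since the high-block comparison already reads $(a_L+b_L+c_H)\equiv(a_L+b_L)\pmod{2^{\wbs-\rr}}$ regardless of $c_L$. So you do not need to secure $c_L=0$ before extracting $c_H=0$; both follow directly and separately.
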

The previous result can be generalized for the case where we have more than 2 addends.
\begin{proposition}\label{sumdaumgen}
	Let $a_1,a_2,\ldots,a_k$ be 
	independent and uniformly distributed strings of $\wbs$ bits, and
	$1\leq \rr\leq \wbs-1$ an integer.
	Then 
	\begin{equation}\label{daumgen}
	\Pr
	\left[
	\boxplus_{i=1}^{k}\left(a_{i}\lll \rr\right)=\left(\boxplus_{i=1}^{k}a_{i}\right)\lll \rr
    \right]
    = 
    \frac{F(\rr,k,\wbs)F(\wbs-\rr,k,\wbs)}{2^{k\wbs}}
    \end{equation}
where, for $1\leq q\leq \wbs-1$, 

\begin{equation}\label{fa}
\scalebox{0.93}{$
F(q,k,\wbs) 
= \sum_{h=0}^{\left\lfloor \frac{k\left(2^{q}-1\right)}{2^{\wbs}}\right\rfloor }\sum_{j=0}^{k}(-1)^{j}\binom{k}{j}\left(
\binom{h2^{\wbs}-(j-1)2^{q}-1+k}{h2^{\wbs}-(j-1)2^{q}-1}-\binom{h2^{\wbs}-j2^{q}-1+k}{h2^{\wbs}-j2^{q}-1}\right)
\,.$}
\end{equation}

\end{proposition}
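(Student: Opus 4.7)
The plan is to generalize Daum's argument for two summands to an arbitrary number $k$. First, I decompose each $a_i\in\FF_2^\wbs$ as $a_i = a_{i,H}\cdot 2^{\wbs-\rr} + a_{i,L}$, where $a_{i,H}\in\{0,\dots,2^\rr-1\}$ is the top $\rr$ bits (which becomes the bottom $\rr$ bits after $\lll\rr$) and $a_{i,L}\in\{0,\dots,2^{\wbs-\rr}-1\}$ is the bottom $\wbs-\rr$ bits, so that $a_i\lll\rr = a_{i,L}\cdot 2^\rr + a_{i,H}$. Set the integer sums $P=\sum_{i=1}^k a_{i,L}$ and $Q=\sum_{i=1}^k a_{i,H}$.

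Second, I write both sides of the rotational equality over $\mathbb{Z}$ and reduce modulo $2^\wbs$. As integers, $\sum_i(a_i\lll\rr)=P\cdot 2^\rr+Q$ and $\sum_i a_i = Q\cdot 2^{\wbs-\rr}+P$, after which a final rotation by $\rr$ is applied on the right. Writing $P=\alpha\cdot 2^{\wbs-\rr}+\beta$ and $Q=\gamma\cdot 2^\rr+\delta$ with remainders $\beta<2^{\wbs-\rr}$, $\delta<2^\rr$, careful bookkeeping of the two cascaded reductions brings both $\wbs$-bit results into the form (high half)$\cdot 2^\rr+$(low half); matching the halves yields the pair of conditions
\begin{equation*}
\lfloor P/2^{\wbs-\rr}\rfloor\equiv 0\pmod{2^\rr},\qquad \lfloor Q/2^\rr\rfloor\equiv 0\pmod{2^{\wbs-\rr}},
\end{equation*}
expressing that neither partial sum may carry across the rotation boundary except by a full multiple of the word size. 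For $k=2$ these degenerate to $P<2^{\wbs-\rr}$ and $Q<2^\rr$, and I would check against Daum's $(2^\rr+1)(2^{\wbs-\rr}+1)/2^{\wbs+2}$ as a sanity test.

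Third, since the two conditions depend on disjoint groups of independent uniform random variables, $\{a_{i,L}\}_i$ versus $\{a_{i,H}\}_i$, the probability factors as a product of two individual probabilities. Each of them asks that an integer sum of $k$ uniform variables in $\{0,\dots,2^q-1\}$, for $q=\wbs-\rr$ or $q=\rr$, falls in the union of intervals $\bigcup_{h\ge 0}[h\cdot 2^\wbs,\,h\cdot 2^\wbs+2^q-1]$; the range of $h$ is truncated at $\lfloor k(2^q-1)/2^\wbs\rfloor$ because the maximum possible sum is $k(2^q-1)$.

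Finally, I count such tuples via the standard stars-and-bars formula combined with inclusion--exclusion on the violations $x_i\ge 2^q$: $|\{(x_1,\dots,x_k):0\le x_i\le 2^q-1,\ \sum x_i\le M\}| = \sum_{j=0}^k(-1)^j\binom{k}{j}\binom{M-j2^q+k}{k}$. Taking this count at $M=h\cdot 2^\wbs+2^q-1$ minus the same at $M=h\cdot 2^\wbs-1$ isolates the interval of length $2^q$, and summing over $h$ reproduces $F(q,k,\wbs)$ exactly as in \eqref{fa}; dividing by the total $(2^q)^k$ in each factor yields the claimed value $F(\rr,k,\wbs)F(\wbs-\rr,k,\wbs)/2^{k\wbs}$. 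The main obstacle is the iff in the carry analysis: the equality of two $\wbs$-bit numbers obtained after several nested reductions must be shown to decouple cleanly into the two simple half-word conditions; once that is in place, the remaining combinatorics is routine.
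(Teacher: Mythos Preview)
Your approach is correct and essentially identical to the paper's: decompose each $a_i$ into its top $\rr$ and bottom $\wbs-\rr$ bits, reduce the rotational equality to two independent carry conditions on the partial sums (the paper phrases them as $t=c\cdot 2^{\rr}$ and $m=d\cdot 2^{\wbs-\rr}$, which are exactly your congruences on $\lfloor P/2^{\wbs-\rr}\rfloor$ and $\lfloor Q/2^{\rr}\rfloor$), and then count via stars-and-bars with inclusion--exclusion. The only cosmetic difference is that the paper sums the exact-count formula over $l=0,\dots,2^q-1$ and telescopes, whereas you subtract two cumulative $\sum x_i\le M$ counts; both routes produce \eqref{fa} verbatim.
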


\begin{proof}
	 In order to evaluate the number of solutions to 
	 \begin{equation}\label{summodai}
	   	\boxplus_{i=1}^{k}\left(a_{i}\lll \rr\right)=\left(\boxplus_{i=1}^{k}a_{i}\right)\lll \rr 
	 \end{equation}
i.e., how many $\wbs$-bit words $a_{1},\ldots,a_{k}$ satisfy (\ref{summodai}),
	 we represent every binary string $a_{i}$  and its left rotation by $\rr$ as integers:
	\begin{align*}
	a_{i}&=a_{i}^{L}2^{\wbs-\rr}+a_{i}^{R},\\
	a_{i}\lll \rr&=a_{i}^{R}2^{\rr}+a_{i}^{L}
	\end{align*}
	where $0\leq a_{i}^{L}\leq 2^{\rr}-1$ and $0\leq a_{i}^{R}\leq 2^{\wbs-\rr}-1$, are, respectively, the integers represented by the left $\rr$ places and right $\wbs-\rr$ places of the binary string $a_{i}$. Now, if 
	\begin{equation}\label{alar}
	\sum_{i=1}^{k}a_{i}^{L}=m2^{\rr}+w, \quad \sum_{i=1}^{k}a_{i}^{R}=t2^{\wbs-\rr}+s
	\end{equation}
	with $m,t,w,s$ non negative integers such that  $w\leq 2^{\rr}-1$ and $ s\leq 2^{\wbs - \rr}-1$, we have
    $$\left(\boxplus_{i=1}^{k}a_{i}\right)=\left(\left(\sum_{i=1}^{k}a_{i}^{L}\right)2^{\wbs - \rr}+\sum_{i=1}^{k}a_{i}^{R}\right)\mod 2^{\wbs}=((w+t)2^{\wbs - \rr}+s) \mod 2^{\wbs}$$
   thus, since $ s\leq 2^{\wbs - \rr}-1$ 
   \begin{equation}\label{sumrotated}
   \left(\boxplus_{i=1}^{k}a_{i}\right)\lll \rr=s 2^{\rr}+u,\quad u=(w+t)\mod 2^{\rr}
   \end{equation}
   On the other hand
	$$\boxplus_{i=1}^{k}\left(a_{i}\lll \rr\right)=\left(\left(\sum_{i=1}^{k}a_{i}^{R}\right)2^{\rr}+\sum_{i=1}^{k}a_{i}^{L}\right)\mod 2^{\wbs}=\left((s+m)2^{\rr}+w\right)\mod 2^{\wbs}	$$
and since $ w\leq 2^{\rr}-1$  
\begin{equation}\label{sumrotations}
\boxplus_{i=1}^{k}\left(a_{i}\lll \rr\right)=v2^{\rr}+w, \quad v=(s+m) \mod 2^{\wbs-\rr}
\end{equation}
Thus from (\ref{sumrotated}) and (\ref{sumrotations}) we have that  (\ref{summodai}) holds 
if and only if  
\begin{equation}\label{sumequal}
s2^{\rr}+u=v2^{\rr}+w
\end{equation}
Hence by (\ref{sumrotated}) and since $w\leq 2^{\rr}-1$ we have
$$
u=(w+t)\bmod 2^{r} =w \bmod 2^{r}
$$
which implies 
\begin{equation}\label{t}
t=c 2^{\rr} \quad  c\in \mathbb{N}
\end{equation}
and also that $u=w$.
Thus we find from equality  (\ref{sumequal}) that $s$ must be equal to $v$, which implies 
\begin{equation}\label{m}
m=d 2^{\wbs-\rr} \quad  d\in \mathbb{N}
\end{equation} 
since from (\ref{sumrotations}) we have
$v=(s+m) \bmod 2^{\wbs-\rr}$.
Therefore substituting (\ref{t}) and (\ref{m}) in (\ref{alar}) we 
observe that in order to count the number of solutions to (\ref{summodai})
we have to count the number $F(q,k,\wbs)$ of solutions in non negative integers $y_{i}$ of the systems
\begin{equation}\label{systems}
\begin{cases}
\sum_{i=1}^{k}y_{i}=h2^{\wbs}+l\\
\quad 0\leq y_{i}\leq 2^{q}-1
\end{cases}
\end{equation}
 with $ l=0,1,\ldots, 2^{q}-1$ and where  $h=0,\ldots, \left\lfloor \frac{k\left(2^{q}-1\right)}{2^{\wbs}}\right\rfloor$ since we must have
 $h2^{\wbs}+l\leq k(2^{q}-1)$ or equivalently 
 $$h+\frac{l}{2^{\wbs}}\leq\frac{k(2^{q}-1)}{2^{\wbs}}, \quad 0\leq\frac{l}{2^{\wbs}}<1$$
Thanks to Theorem 4.3 p. 138 of \cite{charalambos2002enumerative} the number of solutions of (\ref{systems}) for a fixed value of $l$ is 
$$\sum_{j=0}^{k}(-1)^{j}\binom{k}{j}
 \binom{h2^{\wbs}+l-j2^{q}+k-1}{h2^{\wbs}+l-j2^{q}},$$
thus summing  for all the values of $l$ gives
\begin{align*}
\sum_{l=0}^{2^{q}-1}\sum_{j=0}^{k}(-1)^{j}\binom{k}{j}
\binom{h2^{\wbs}+l-j2^{q}+k-1}{h2^{\wbs}+l-j2^{q}}
=
\\
=\sum_{j=0}^{k}(-1)^{j}\binom{k}{j}\sum_{i=h2^{\wbs}-j2^{q}}^{h2^{\wbs}-(j-1)2^{q}-1}\binom{i+k-1}{i}
=
\\
=
\sum_{j=0}^{k}(-1)^{j}\binom{k}{j}\left(
\binom{h2^{\wbs}-(j-1)2^{q}-1+k}{h2^{\wbs}-(j-1)2^{q}-1}-\binom{h2^{\wbs}-j2^{q}-1+k}{h2^{\wbs}-j2^{q}-1}\right)
\end{align*}
and with a final summation on the values of $h$ we obtain (\ref{fa}). 
Therefore the number of solutions to (\ref{summodai}) clearly is the product
$F(\rr,k,\wbs)F(\wbs-\rr,k,\wbs)$ and
since we have $2^{k\wbs}$ possible choices for the $k$ $\wbs$-bit strings $a_{i}$ we easily obtain (\ref{daumgen}).
\qed
\end{proof}
\begin{remark}
In Proposition \ref{sumdaumgen} we have derived a formula for the probability that equality (\ref{summodai}) holds. This result is in general different from  the one on chained modular addictions in Lemma 2 of \cite{khovratovich2015rotational} since we do not deal with a chain and so we do not request that all the conditions similar to (\ref{summodai})  involving  $a_{1},a_{2},\ldots,a_{h}$ with $h=2,\dots,k-1$ must also be simultaneously satisfied.
\end{remark}
\begin{corollary}
		Let $a, b, c$ be 
		independent and uniformly distributed strings of $\wbs$ bits, and
		$1\leq\rr\leq \wbs-1$ an integer.
		Then
		\begin{equation}
		\label{daumgen1}
		\begin{split}
		& \Pr
		\left[
		(a\lll \rr)\boxplus (b\lll \rr)\boxplus(c\lll \rr)
		=
		(a \boxplus b\boxplus c)\lll \rr
		\right]
		=\\
		&=\frac{D(2^{\rr}+2)(2^{\wbs-\rr}+2)}{9\cdot2^{\wbs}}+\mathbbm{1}_{\left\lbrace \rr=1 \vee \rr=\wbs-1  \right\rbrace} \frac{4}{2^{3\wbs}}\binom{2^{\wbs -1}}{2^{\wbs -1}-3}=P(\rr,\wbs)
		\end{split}
		\end{equation}
		where with 
		$\mathbbm{1}_{Z}$
		we indicate the usual characteristic function of $Z$, which is equal to 1 when $Z$ is true and equal to 0 when $Z$ is false.
\end{corollary}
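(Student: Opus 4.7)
The plan is to specialize Proposition \ref{sumdaumgen} to the case $k=3$ and then compute $F(q,3,\wbs)$ in closed form via its combinatorial interpretation, rather than by massaging the inclusion–exclusion formula (\ref{fa}) directly. By Proposition \ref{sumdaumgen} the probability we seek equals $F(\rr,3,\wbs)F(\wbs-\rr,3,\wbs)/2^{3\wbs}$, and the proof of that proposition shows that $F(q,3,\wbs)$ counts the triples $(y_1,y_2,y_3)$ of non-negative integers with $0\le y_i\le 2^q-1$ solving $\sum_i y_i = h\,2^\wbs + l$ for $l=0,1,\ldots,2^q-1$ and $h=0,1,\ldots,\lfloor 3(2^q-1)/2^\wbs\rfloor$. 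Since $1\le q\le \wbs-1$, one checks that $h$ runs only over $\{0\}$ when $q\le \wbs-2$ and over $\{0,1\}$ when $q=\wbs-1$.

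First I would observe that the $h=0$ contribution is the same in both regimes: the constraint $\sum y_i = l\le 2^q-1$ forces $y_i\le 2^q-1$ automatically, so stars-and-bars together with the hockey-stick identity yields
\[
\sum_{l=0}^{2^q-1}\binom{l+2}{2}=\binom{2^q+2}{3}.
\]
Next, for the boundary case $q=\wbs-1$, I would compute the $h=1$ contribution by reflection: the substitution $z_i=2^{\wbs-1}-1-y_i$ turns $\sum y_i=2^\wbs+l$ with $0\le y_i\le 2^{\wbs-1}-1$ into $\sum z_i = 2^{\wbs-1}-3-l$ with $0\le z_i\le 2^{\wbs-1}-1$, which in particular requires $l\le 2^{\wbs-1}-3$. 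Summing the unrestricted counts $\binom{2^{\wbs-1}-1-l}{2}$ for $l=0,1,\ldots,2^{\wbs-1}-3$ by hockey stick gives $\binom{2^{\wbs-1}}{3}$. Therefore
\[
F(q,3,\wbs)=\binom{2^q+2}{3}+\mathbbm{1}_{\{q=\wbs-1\}}\binom{2^{\wbs-1}}{3}.
\]

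To finish, I would multiply $F(\rr,3,\wbs)F(\wbs-\rr,3,\wbs)$ and divide by $2^{3\wbs}$. Since $\rr+(\wbs-\rr)=\wbs$ with both values in $\{1,\ldots,\wbs-1\}$, exactly one of $\rr,\wbs-\rr$ equals $\wbs-1$ if and only if $\rr\in\{1,\wbs-1\}$; in that case the indicator term $\binom{2^{\wbs-1}}{3}$ occurs in exactly one of the two factors, while the other factor contains the constant $\binom{2^1+2}{3}=\binom{4}{3}=4$, so the extra cross contribution is precisely $4\binom{2^{\wbs-1}}{3}/2^{3\wbs}$. The main product $\binom{2^\rr+2}{3}\binom{2^{\wbs-\rr}+2}{3}/2^{3\wbs}$ is simplified by expanding $\binom{2^q+2}{3}=2^q(2^q+1)(2^q+2)/6$ and invoking the identity $(2^\rr+1)(2^{\wbs-\rr}+1)=4\cdot 2^\wbs\cdot D$ coming from the Daum formula (\ref{daum}); collecting the remaining factors produces the leading term $D(2^\rr+2)(2^{\wbs-\rr}+2)/(9\cdot 2^\wbs)$. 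Rewriting $\binom{2^{\wbs-1}}{3}=\binom{2^{\wbs-1}}{2^{\wbs-1}-3}$ then reproduces $P(\rr,\wbs)$ exactly.

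The main obstacle is the bookkeeping at the boundary $q=\wbs-1$: there the constraint $y_i\le 2^q-1$ is no longer automatically enforced by the equation $\sum y_i=2^\wbs+l$, and the reflection $z_i=2^{\wbs-1}-1-y_i$ must be set up carefully so as not to double-count or omit the extremal values of $l$. Everything else is mechanical algebra once the two closed-form counts for $F(q,3,\wbs)$ are in hand.
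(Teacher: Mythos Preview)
Your proposal is correct and follows essentially the same approach as the paper: specialize Proposition~\ref{sumdaumgen} to $k=3$, observe that $h$ ranges over $\{0\}$ when $q\le\wbs-2$ and over $\{0,1\}$ when $q=\wbs-1$, evaluate $F(q,3,\wbs)$ in closed form, and simplify the product $F(\rr,3,\wbs)F(\wbs-\rr,3,\wbs)/2^{3\wbs}$ using the Daum identity~(\ref{daum}). The only difference is cosmetic: the paper reads off $F$ by substituting into the inclusion--exclusion formula~(\ref{fa}), whereas you recompute it directly from the underlying counting problem via stars-and-bars, the hockey-stick identity, and the reflection $z_i=2^{\wbs-1}-1-y_i$, landing on the same binomial coefficients $\binom{2^q+2}{3}=\binom{2^q+2}{2^q-1}$ and $\binom{2^{\wbs-1}}{3}=\binom{2^{\wbs-1}}{2^{\wbs-1}-3}$.
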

\begin{proof}
If we use formula (\ref{daumgen}) with $k=3$ and 
$1\leq q\leq \wbs-1$ we observe that 
$$\frac{3(2^{q}-1)}{2^{\wbs}}=\frac{2^{q+1}+2^{q}-3}{2^\wbs}=\frac{2^{q+1}}{2^{\wbs}}+\frac{2^{q}-3}{2^{\wbs}}$$
thus since $0<\frac{2^{q}-3}{2^{\wbs}}<1$ we have 
$$\left\lfloor \frac{3\left(2^{q}-1\right)}{2^{\wbs}}\right\rfloor=1$$ when
$q=\wbs-1$. Therefore when we use (\ref{daumgen}) with $\rr=\wbs-1$ or, equivalently, $\rr=1$
we find from (\ref{fa}) 
$$F(1,3,\wbs)=4, \quad F(\wbs-1,3,\wbs)=\binom{2^{\wbs -1}+2}{2^{\wbs -1}-1}+\binom{2^{\wbs -1}}{2^{\wbs -1}-3}$$ 
while if $2\leq \rr \leq \wbs-2$ we have
$$F(\rr,3,\wbs)=\binom{2^{\rr}+2}{2^{\rr}-1}, \quad F(\wbs-\rr,3,\wbs)=\binom{2^{\wbs-\rr}+2}{2^{\wbs-\rr}-1}$$
since in these situations $0<\frac{3(2^{q}-1)}{2^{\wbs}}<1$  for $q=\rr, \wbs-\rr$. Thus a straightforward calculation shows that (\ref{daumgen1}) holds.
\qed
\end{proof}
\begin{remark}
We observe that when $2\leq \rr \leq \wbs-2$ this result shows a value equal to  case $k=3$ of 
Lemma 2 in \cite{khovratovich2015rotational}	, 
in which  we have
the probability of chained modular additions for
$a_{1},a_{2},\ldots,a_{k}$, 
$\wbs$-bits words chosen at random given by
\begin{equation}\label{khovra}
\Pr[\mathcal{E}] 
=
\frac{1}{2^{3\wbs}}
\binom{2^{\rr}+2}{2^{\rr}-1}
\binom{2^{\wbs-\rr}+2}{2^{\wbs-\rr}-1}=\frac{D(2^{\rr}+2)(2^{\wbs-\rr}+2)}{9\cdot2^{\wbs}} 
\end{equation}
where
\begin{equation}\label{chain}
\begin{split}
\mathcal{E}
=& 
\left[
(a_{1}\boxplus a_2)\lll \rr
=
(a_{1}\lll \rr)\boxplus(a_{2}\lll \rr)
\right]
\cap\\
&\left[
(a_{1}\boxplus a_{2}\boxplus a_{3})\lll \rr
=
(a_{1}\lll \rr)\boxplus(a_{2}\lll \rr)\boxplus(a_{3}\lll \rr)
\right]
\,.
\end{split}
\end{equation}
On the other hand when $\rr=1$ or $\rr=\wbs-1$  from (\ref{daumgen1}) we find the different value
$$
P(1,\wbs)=P(\wbs-1,\wbs)=\frac{4(2^{2\wbs-3}+1)}{3\cdot 2^{2\wbs}} 
\,.
$$
which is greater than the corresponding one given by (\ref{khovra})
This is an immediate consequence of the fact that if $k=3$ we have one more addend to be considered in  (\ref{fa}) only when $\rr=1,\wbs-1$, i. e., more solutions to (\ref{summodai}) than to the system of equalities in (\ref{chain}).
\end{remark}

We now show how to obtain an upper and lower bound for the rotational probability.
\begin{theorem}
\label{thm:quarter_round_bounds}
The rotational probability $\qrprob$ 
of a single ChaCha quarter round is such that,
\begin{equation}\label{bounds}
    D^3
    P(\rr,\wbs)
    \leq 
    \qrprob
    \leq
    \left(
    \frac{D(2^{\rr}+2)(2^{\wbs-\rr}+2)}{9\cdot2^{\wbs}}
    \right)^2
\end{equation}
\end{theorem}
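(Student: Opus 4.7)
The plan is to exploit the equivalence established in the preceding proposition: $\qrprob$ equals the probability, over independent uniform $\wbs$-bit inputs $x_{0}, x_{1}, x_{2}, x_{3}$, that conditions (\ref{cond1})--(\ref{cond4}) all hold simultaneously. First I would regroup these four conditions into two chain-type events of the form (\ref{chain}). Substituting (\ref{cond1}) into (\ref{cond3}) rewrites the conjunction as a 3-addend rotational equality on the triple $(x_{0}, x_{1}, b_{1})$, so this conjunction is precisely the chain event $\mathcal{E}_{A}$ associated with $(x_{0}, x_{1}, b_{1})$. Analogously, (\ref{cond2}) combined with (\ref{cond4}) is the chain event $\mathcal{E}_{B}$ associated with $(b_{3}, x_{2}, y_{3})$. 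Thus $\qrprob = \Pr[\mathcal{E}_{A} \cap \mathcal{E}_{B}]$.

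A preparatory step is to verify that each triple is jointly uniformly distributed over $(\FF_{2}^{\wbs})^{3}$. Because $x_{3}$ enters $b_{3}$ bijectively (through a single $\oplus$ with $b_{0}$ followed by $\lll r_{1}$), and $x_{2}$ enters $b_{2}$, hence $b_{1}$, bijectively through a single $\boxplus$ and $\oplus$, the triples $(x_{0}, x_{1}, b_{1})$ and $(b_{3}, x_{2}, y_{3})$ each inherit joint uniformity from that of $(x_{0}, x_{1}, x_{2}, x_{3})$. This justifies applying Daum's formula (\ref{daum}) to the 2-addend marginals and the corollary (\ref{daumgen1}) to the 3-addend marginals.

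For the upper bound, the plan is to argue that $\mathcal{E}_{A}$ and $\mathcal{E}_{B}$ decouple, so that
\[
\qrprob = \Pr[\mathcal{E}_{A} \cap \mathcal{E}_{B}] \le \Pr[\mathcal{E}_{A}]\,\Pr[\mathcal{E}_{B}] = \left(\frac{D(2^{\rr}+2)(2^{\wbs-\rr}+2)}{9\cdot 2^{\wbs}}\right)^{2},
\]
each factor being the Khovratovich chain probability (\ref{khovra}) for three addends. The main obstacle is justifying this factorisation rigorously, since $b_{1}$, $b_{3}$, $y_{3}$ all depend on the same underlying randomness $(x_{0}, x_{1}, x_{2}, x_{3})$; I would handle it through a direct counting argument that parametrises the solution set of (\ref{cond1})--(\ref{cond4}) so that the free parameters governing $\mathcal{E}_{A}$ (essentially $x_{0}, x_{1}$ together with the two halves of $b_{1}$) become independent of those governing $\mathcal{E}_{B}$ (essentially $x_{2}$ with the halves of $b_{3}$ and $y_{3}$).

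For the lower bound, the plan is the sequential chain-rule decomposition
\[
\qrprob = \Pr[C_{1}]\cdot\Pr[C_{2}\mid C_{1}]\cdot\Pr[C_{3}\mid C_{1}\cap C_{2}]\cdot\Pr[C_{4}\mid C_{1}\cap C_{2}\cap C_{3}],
\]
where $C_{i}$ denotes the $i$-th of conditions (\ref{cond1})--(\ref{cond4}). Each of the first three factors equals $D$ by Daum's formula, because the conditional distributions of the two summands involved in each condition remain uniform and independent, thanks to the bijective entry of $x_{2}$ and $x_{3}$ into the quarter round. The fourth factor, once (\ref{cond2}) has been assumed, collapses to the 3-addend rotational equality on $(y_{3}, b_{3}, x_{2})$, whose probability is $P(\rr, \wbs)$ by (\ref{daumgen1}). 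Combining these four estimates yields the lower bound $D^{3} P(\rr, \wbs)$. The delicate sub-claim is once again the preservation of marginal uniformity after each conditioning, which is the technical heart of the argument and is controlled via the bijective roles of $x_{2}$ and $x_{3}$.
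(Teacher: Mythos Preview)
Your overall strategy---recognising that (\ref{cond1})$\cap$(\ref{cond3}) and (\ref{cond2})$\cap$(\ref{cond4}) are precisely the three-term chain events $\mathcal{E}_A$, $\mathcal{E}_B$ of (\ref{chain}), and bounding $\qrprob=\Pr[\mathcal{E}_A\cap\mathcal{E}_B]$ above by the product $\Pr[\mathcal{E}_A]\Pr[\mathcal{E}_B]$---is exactly the paper's approach, and your check that each of the two triples is jointly uniform on $(\FF_2^{\wbs})^3$ is a useful addition the paper leaves implicit. Note, though, that neither you nor the paper actually \emph{proves} the inequality $\Pr[\mathcal{E}_A\cap\mathcal{E}_B]\le\Pr[\mathcal{E}_A]\Pr[\mathcal{E}_B]$: the paper simply asserts that the dependence among the words ``reduces the count'' and leans on the experimental confirmation of Section~\ref{sec:experimental_results}, which is essentially what your proposed ``direct counting argument'' would have to supply.

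Your lower-bound plan, however, has a structural gap. The chain rule is an \emph{equality}, so if the four conditional factors really equalled $D,D,D,P(\rr,\wbs)$ as you assert, you would obtain $\qrprob=D^{3}P(\rr,\wbs)$ exactly, which Table~\ref{tab:qr_experimental_results} shows is strictly too small. Concretely, the third factor $\Pr[C_{3}\mid C_{1}\cap C_{2}]$ is \emph{not} $D$: once you condition on $C_{1}$, the word $b_{0}=x_{0}\boxplus x_{1}$ is no longer uniform, so the pair $(b_{0},b_{1})$ entering (\ref{cond3}) does not satisfy Daum's hypotheses. Likewise the fourth factor is the \emph{conditional} probability of the three-addend equality given $C_{1}\cap C_{2}\cap C_{3}$, not its unconditional value $P(\rr,\wbs)$ from (\ref{daumgen1}). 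To obtain a lower bound via the chain rule you would need to argue that each conditional factor is \emph{at least} the stated quantity, and you never address that direction. The paper organises the lower bound differently: it attaches the factor $D$ to (\ref{cond1}), (\ref{cond2}), (\ref{cond4}) and the factor $P(\rr,\wbs)$ to (\ref{cond3}), and motivates the inequality direction by explicitly ``requesting a restrictive condition'' (i.e.\ intersecting with an auxiliary event of smaller probability) rather than through a chain-rule computation---still heuristic, but at least naming the mechanism that produces the inequality.
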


\begin{proof}
Let us suppose that we can couple equations (\ref{cond1}), (\ref{cond3}) and equations (\ref{cond2}), (\ref{cond4}), considering respectively $x_{0},x_{1},b_{1}$ and $y_{3},b_{3},x_{2}$ as two triplets of random $\wbs$-bits words. Then we may find an upper bound for $\qrprob$ multiplying the probabilities of the two chains
$$\mathcal{E}_{1}= \left[(x_{0}\boxplus x_{1})\lll \rr=(x_{0}\lll \rr)\boxplus(x_{1}\lll \rr)\right]\cap$$
$$ \left[(x_{0}\boxplus x_{1}\boxplus b_{1})\lll \rr=(x_{0}\lll \rr)\boxplus(x_{1}\lll \rr)\boxplus(b_{1}\lll \rr)\right]$$
$$\mathcal{E}_{2}= \left[(b_{3}\boxplus x_{2})\lll \rr=(b_{3}\lll \rr)\boxplus(x_{2}\lll \rr)\right]\cap$$
$$\left[(y_{3}\boxplus b_{3}\boxplus x_{2})\lll \rr=(y_{3}\lll \rr)\boxplus(b_{3}\lll \rr)\boxplus(x_{2}\lll \rr)\right]$$
obtaining
$
\qrprob\leq \Pr[\mathcal{E}_{1}]\Pr[\mathcal{E}_{2}]=\Pr[\mathcal{E}]^2
\,,
$
i.e.,
$
\qrprob\leq\left(\frac{D(2^{\rr}+2)(2^{\wbs-\rr}+2)}{9\cdot2^{\wbs}}\right)^2
\,
$
since in the real situation, where those triplets of words are in general not all independent, there are less possible values which satisfy conditions (\ref{cond1}), (\ref{cond2}), (\ref{cond3}), (\ref{cond4}), with respect to all possible value that they may assume.

In order to obtain a lower bound we observe that
(\ref{cond1}) and (\ref{cond2}) hold with probability $D$ since we may consider $(x_{0},x_{1})$ and $(b_{3},x_{2})$ as couples of independent and uniformly distributed random variables. Moreover we may also request the restrictive condition that $(y_{3}, b_{3}\boxplus x_{2})$ are independent and uniformly distributed random variables such that also (\ref{cond4}) hold with probability $D$,  considering for (\ref{cond3}) the  probability given by (\ref{daumgen1}) and obtaining
$
\qrprob\geq D^3P(\rr,\wbs)
\,.
$
Thus we have
$
    D^3P(\rr,\wbs)\leq \qrprob \leq
    \left(\frac{D(2^{\rr}+2)(2^{\wbs-\rr}+2)}{9\cdot2^{\wbs}}\right)^2
    \,.
$
\qed
\end{proof}
%

\subsection{Experimental result}
\label{sec:experimental_results}

To have an additional experimental confirmation of the correctness of the bounds in \autoref{thm:quarter_round_bounds}, 
we implemented a toy version of ChaCha quarter round, 
using smaller word bit size and several different combinations of round rotations $r_0, r_1, r_2, r_3,$. 
To run the experiment, 
we exhaustively search through all possible values of 
$(x_{0},x_{1},x_{2},x_{3})$, 
then we computed 
$(\rot[\rr]{x_0}, \ldots, \rot[\rr]{x_3})$, 
evaluated both 4tuples over the quarter round function $\quarterround$,
and finally checked if 
the condition
$
(\rot[\rr]{y_{0}}, \ldots, \rot[\rr]{y_{3}})
= 
\quarterround(\rot[\rr]{x_0}, \ldots, \rot[\rr]{x_3})
$
was verified, 
and counted how many times we would happen 
(\#collisions column in \autoref{tab:qr_experimental_results}).
In \autoref{tab:qr_experimental_results}, 
we show some of the results for word size of 4, 5, and 6 bits.
The value $p$ is the probability to have a rotational collision 
for a random permutation $f$, i.e. 
$
p = 
\Pr[ 
(\rot[\rr]{y_{0}}, \ldots, \rot[\rr]{y_{3}})
= 
f(\rot[\rr]{x_0}, \ldots, \rot[\rr]{x_3})
]
$.
Notice that the case $\rr$ is equal to $\wbs-\rr$, so we do not report it in the table.

\begin{table}[ht]
    \centering
    \begin{tabular}{|c|c|c|c|c|c|}
\hline
\multicolumn{6}{|c|}{$\wbs=4, (r_0, r_1, r_2, r_3) = (1,3,2,1)$} \\
\hline
$\rr$ & \#collisions & Lower Bound & $\qrprob$ & Upper Bound & $p$ \\
\hline
1 &        747 & $0.00880 \sim 2^{-6.83}$ &  \textbf{0.01140} & $0.01373 \sim 2^{-6.19}$  &  $2^{-16.00}$ \\
2 &        388 & $0.00582 \sim 2^{-7.42}$ &  \textbf{0.00592} & $0.00954 \sim 2^{-6.71}$  &  $2^{-16.00}$ \\
\hline
\hline
\multicolumn{6}{|c|}{$\wbs = 5, (r_1, r_2, r_3, r_4) = (4,3,2,1)$} \\
\hline
$\rr$ & \#collisions & Lower Bound & $\qrprob$ & Upper Bound & $p$ \\
\hline
1 &       8917 & $0.00630 \sim 2^{-7.31}$ &  \textbf{0.00850} & $0.00992 \sim 2^{-6.66}$  &  $2^{-20.00}$ \\
2 &       3405 & $0.00318 \sim 2^{-8.30}$ &  \textbf{0.00325} & $0.00536 \sim 2^{-7.54}$  &  $2^{-20.00}$ \\
\hline
\hline
\multicolumn{6}{|c|}{$\wbs=6, (r_1, r_2, r_3, r_4) = (5,3,2,1)$} \\
\hline
$\rr$ & \#collisions & Lower Bound & $\qrprob$ & Upper Bound & $p$ \\
\hline
1 &     123317 & $0.00528 \sim 2^{-7.57}$ &  \textbf{0.00735} & $0.00834 \sim 2^{-6.91}$  &  $2^{-24.00}$ \\
2 &      39482 & $0.00228 \sim 2^{-8.78}$ &  \textbf{0.00235} & $0.00388 \sim 2^{-8.01}$  &  $2^{-24.00}$ \\
3 &      32628 & $0.00174 \sim 2^{-9.17}$ &  \textbf{0.00194} & $0.00302 \sim 2^{-8.37}$  &  $2^{-24.00}$ \\
\hline
    \end{tabular}
    \caption{Experimental results on a toy version of ChaCha quarter round.}
    \label{tab:qr_experimental_results}
\end{table}

\subsection{Bounds propagation through the full round}
\label{sec:bounds_one_round}

We indicate with 
\begin{itemize}
    \item $Y = \round(X)$ the application of one round of the ChaCha permutation (either a column or a diagonal round).
    \item $Y = \round^i(X)$ the application of $i$ consecutive round of the ChaCha permutation, alternating column to diagonal rounds (where the first round $\round^1$ is a column round).
\end{itemize}

The following theorem shows how to extend the lower and  upper bounds of \autoref{thm:quarter_round_bounds} from the ChaCha quarter round to one full round of the ChaCha permutation.
\begin{theorem}
\label{thm:bounds_one_round}
Let
$\lb, \ub$ be such that
$ 
\lb
\leq 
\Pr\left[\prot{{\bm{y}}} = \quarterround(\prot{\bm{x}}) \right]
\leq
\ub
$.
Then
\begin{equation}
\lb^\std
\leq 
\Pr\left[\prot{{Y}} = \round(\prot{X}) \right]
\leq
\ub^\std
\end{equation} 
\end{theorem}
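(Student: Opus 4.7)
The plan is to exploit the parallel structure of a ChaCha round: by definition, $\round$ (whether column or diagonal) consists of four independent applications of $\quarterround$ to four disjoint 4-tuples of words taken from $X$ (the four columns in $\columnround$, or the four diagonals in $\diagonalround$). Because the rotation $\prot{}$ acts component-wise, the event $\prot{Y} = \round(\prot{X})$ decomposes as the intersection over $i = 0, 1, 2, 3$ of four events of the form
\[
\prot{\bm{y}^{(i)}} = \quarterround\bigl(\prot{\bm{x}^{(i)}}\bigr),
\]
where $\bm{x}^{(i)}$ denotes the $i$-th column (resp.\ diagonal) of $X$ and $\bm{y}^{(i)}$ the corresponding output 4-tuple.

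First I would fix the underlying probability model: $X$ is drawn uniformly from $\mathcal{M}_{\std\times\std}(\FF_2^{\wbs})$, so its $\std^2$ word entries are mutually independent and uniform. Partitioning those $\std^2$ entries into the $\std$ columns (or the $\std$ diagonals, which are also disjoint because the index shift is taken modulo $\std$) gives $\std$ independent 4-tuples, each marginally uniform over $(\FF_2^{\wbs})^{\std}$. Thus the four per-column rotational events defined above are mutually independent, each distributed exactly as the quarter-round event whose probability is bounded by hypothesis in $[\lb, \ub]$.

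From independence, I would then write
\[
\Pr\bigl[\prot{Y} = \round(\prot{X})\bigr]
= \prod_{i=0}^{\std-1} \Pr\bigl[\prot{\bm{y}^{(i)}} = \quarterround(\prot{\bm{x}^{(i)}})\bigr],
\]
and apply the quarter-round bounds coordinate by coordinate to sandwich this product between $\lb^{\std}$ and $\ub^{\std}$, which is exactly the claim.

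The main subtlety, and really the only nontrivial point, is justifying that the four parallel invocations are probabilistically independent: this hinges on the disjointness of the column (or diagonal) index sets together with the uniform input distribution implicitly assumed in the statement of the quarter-round bounds in \autoref{thm:quarter_round_bounds}. Once that is made explicit, the rest of the argument is a one-line product. I would also note briefly that the same value of $\rr$ is used in every component of $\prot{}$, so the same pair $(\lb, \ub)$ applies uniformly to all four factors.
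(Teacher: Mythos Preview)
Your proposal is correct and follows essentially the same approach as the paper's own proof: both exploit that a full round applies $\std$ quarter rounds in parallel to disjoint tuples of words, so the round-level rotational event factors as a product of $\std$ independent quarter-round events, each bounded by $[\lb,\ub]$. Your write-up is actually more careful than the paper's, since you make explicit the independence justification (disjoint index sets plus uniform input) that the paper simply asserts.
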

\begin{proof}
Since a full round applies 
$\std$ quarter rounds independently in parallel,
to extend the bounds from \autoref{thm:quarter_round_bounds} 
it is sufficient to multiply the probabilities, i.e.,
for the rounds where the quarter round is applied to the columns we have
\begin{align*}
& 
\Pr
    \left[
        \prot{{Y}} 
        = 
        \round(\prot{X}) 
    \right]
= \\
& 
\Pr
\left[
    \left(
        \begin{matrix}
            \rot{y_{0,0}} \\
            \vdots  \\
            \rot{y_{\std-1,0}}  
        \end{matrix}
    \right)
    =
    \quarterround
    \left(
        \begin{matrix}
            \rot{x_{0,0}} \\
            \vdots \\
            \rot{y_{\std-1,0}} 
        \end{matrix}
    \right)
    \wedge
    \cdots
    \wedge
        \left(
        \begin{matrix}
            \rot{y_{0,\std-1}} \\
            \vdots  \\
            \rot{y_{\std-1,\std-1}}  
        \end{matrix}
    \right)
    =
    \quarterround
    \left(
        \begin{matrix}
            \rot{x_{0,\std-1}} \\
            \vdots \\
            \rot{y_{\std-1,\std-1}} 
        \end{matrix}
    \right)
\right]
= \\
&
\Pr
\left[
    \left(
        \begin{matrix}
            \rot{y_{0,0}} \\
            \vdots  \\
            \rot{y_{\std-1,0}}  
        \end{matrix}
    \right)
    =
    \quarterround
    \left(
        \begin{matrix}
            \rot{x_{0,0}} \\
            \vdots \\
            \rot{y_{\std-1,0}} 
        \end{matrix}
    \right)
\right]
\cdot
\ldots
\cdot
\Pr
\left[
    \left(
        \begin{matrix}
            \rot{y_{0,\std-1}} \\
            \vdots  \\
            \rot{y_{\std-1,\std-1}}  
        \end{matrix}
    \right)
    =
    \quarterround
    \left(
        \begin{matrix}
            \rot{x_{0,\std-1}} \\
            \vdots \\
            \rot{y_{\std-1,\std-1}} 
        \end{matrix}
    \right)
\right]
\,.
\end{align*}
For the rounds where the quarter round is applied to the diagonals, 
the proof is alike.
\qed
\end{proof}
Recall that, 
in \autoref{thm:quarter_round_bounds}, 
for $\std = 4$, 
we proved that
$
\lb =
D^3P(\rr,\wbs)
$ 
and 
$
\ub =
\left(
\frac{D(2^{\rr}+2)(2^{\wbs-\rr}+2)}{9\cdot2^{\wbs}}
\right)^2
$. 

\subsection{Bounds propagation through the full permutation}
\label{sec:bounds_full_permutation}

The following theorem shows how to extend the lower and  upper bounds of \autoref{thm:bounds_one_round} from one round of ChaCha to $i$ consecutive rounds.
To prove the theorem, 
we make an assumption that seems to be a good approximation of what happens in practice, i.e.
we assume that the input states of each round 
are independent and uniformly distributed.

\begin{theorem}
\label{thm:bounds_many_rounds}
Let
$\lb, \ub$ be such that
$ 
\lb
\leq 
\Pr\left[\prot{{\bm{y}}} = \quarterround(\prot{\bm{x}}) \right]
\leq
\ub
$.
Then
\begin{equation}
\lb^{\std i}
\leq 
\Pr\left[\prot{{Y}} = \round^i(\prot{X}) \right]
\leq
\ub^{\std i}
\end{equation} 
\end{theorem}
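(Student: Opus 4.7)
The plan is to proceed by induction on the number of rounds $i$, with the base case $i = 1$ being precisely \autoref{thm:bounds_one_round}. For the inductive step, I would write $\round^i = \round \circ \round^{i-1}$, let $Z = \round^{i-1}(X)$ denote the intermediate state after $i-1$ rounds from input $X$, and let $\tilde Z$ denote the intermediate state after $i-1$ rounds starting from the rotated input $\prot{X}$. The target event $\prot{Y} = \round^i(\prot{X})$ then factors as the intersection of (a) $\tilde Z = \prot Z$, i.e., the rotational relation survives the first $i-1$ rounds, and (b) given (a), one more round preserves the rotational relation, i.e., $\prot{\round(Z)} = \round(\prot Z)$.

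First I would apply the chain rule for probabilities:
\begin{align*}
\Pr[\prot Y = \round^i(\prot X)]
&=
\Pr[\tilde Z = \prot Z]
\cdot
\Pr[\prot{\round(Z)} = \round(\prot Z) \mid \tilde Z = \prot Z]\,.
\end{align*}
By the inductive hypothesis, the first factor lies between $\lb^{\std(i-1)}$ and $\ub^{\std(i-1)}$. For the second factor, I would invoke the standing assumption that the input of each round is independent and uniformly distributed; this is exactly the hypothesis under which \autoref{thm:bounds_one_round} was established, so the conditional probability lies between $\lb^{\std}$ and $\ub^{\std}$. Multiplying these two estimates immediately yields $\lb^{\std i} \le \Pr[\prot Y = \round^i(\prot X)] \le \ub^{\std i}$, closing the induction.

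The main obstacle is conceptual rather than calculational: one must justify that the single-round bound from \autoref{thm:bounds_one_round} applies at every layer of the induction. This is precisely where the independence-and-uniformity assumption intervenes. Without it, the distribution of $Z$ after several ChaCha rounds could be biased in a way that either inflates or deflates the true single-round rotational probability relative to the uniform-input estimate, and the clean multiplicative bound would not follow. The authors have already flagged this as a heuristic approximation backed by their experimental results; once it is granted, no further nontrivial calculation is needed and the statement follows by a single telescoping product of $i$ copies of the one-round bound.
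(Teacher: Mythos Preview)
Your proposal is correct and follows essentially the same approach as the paper: induction on $i$ with \autoref{thm:bounds_one_round} as the base case, decomposition $\round^i = \round \circ \round^{i-1}$ for the inductive step, and reliance on the stated independence/uniformity assumption to apply the single-round bound at each layer. Your write-up is in fact more explicit than the paper's in separating the two factors via the chain rule and in identifying where the heuristic assumption enters, but the underlying argument is the same.
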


\begin{proof}
Because of \autoref{thm:bounds_one_round}, 
we have that
$
\lb^\std
\leq 
\Pr\left[\prot{{Y}} = \round^1(\prot{X}) \right]
\leq
\ub^\std
$.
For the inductive step, notice that
$
\Pr
\left[
\prot{{Y}} = \round^i(\prot{X}) 
\right] 
=
\Pr
\left[
\prot{{Y}} = \round(\round^{i-1}(\prot{X})) 
\right] 
$.
Thus, for the assumption of independence of each state, 
we have that the equality
$
\Pr
\left[
\prot{{Y}} = \round(\round^{i-1}(\prot{X})) 
\right] 
=
\Pr
\left[
\prot{{Y}} = \round(\prot{\round^{i-1}(X)}) 
\right] 
$
holds with probability bounded by $\lb^\std$ and $\ub^\std$.
\qed
\end{proof}

\section{Distinguisher description}
\label{sec:distinguisher}

When $\prot[\rr]{{\bm{y}}} = F(\prot[\rr]{\bm{x}})$, 
we say that $F$ has a 
\emph{parallel rotational collision} 
(or simply a \emph{rotational collision}) 
in $\bm{x}$ 
with respect to $\rr$. 
In this section, we show that, 
up to a certain number of rounds, 
ChaCha permutation has more rotational collisions 
with respect to a random permutation with a fixed point.
We first describe what is the probability to have a rotational collision for a random permutation $\Pi$ with a fixed point.
Then, we use this probability and the bounds from 
\autoref{sec:bounds_full_permutation} 
to distinguish ChaCha permutation from $\Pi$.

\subsection{Rotational collisions of a random permutation}
\label{sec:collisions_random_permutation}

For every set $A$, let $\mathcal{S}(A)$ 
be the group of permutations over $A$.
Moreover, for each permutation 
${\Pi} : (\mathbb{F}_2^\wbs)^k \to (\mathbb{F}_2^\wbs)^k$ let
$
C_{\Pi} 
:= 
\#
\big\{
\bm{x} \in (\mathbb{F}_2^\wbs)^k : {\Pi}(\prot{\bm{x}}) = \prot{{\Pi}(\bm{x})}
\big\} 
$
be the number of rotational collisions of ${\Pi}$.
We want first to compute the expected number of rotational collisions of a random permutation.

\begin{proposition}
\label{lem:fixed}
We have
$
\#
\big\{
\bm{x} \in \big(\mathbb{F}_2^\wbs\big)^k : \bm{x} = \prot{\bm{x}}
\big\} 
= 
2^{k\gcd(\wbs, \rr)}
\,.
$
\end{proposition}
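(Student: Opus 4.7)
The plan is to reduce the count to a product of single-word counts, and then invoke a standard orbit-counting argument for the action of cyclic rotation on bit positions.

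First I would observe that $\prot{\bm{x}} = \bm{x}$ holds if and only if each coordinate satisfies $\rot[\rr]{x_i} = x_i$, because the parallel rotation acts coordinate-wise. Hence
\[
\#\bigl\{ \bm{x} \in (\mathbb{F}_2^\wbs)^k : \bm{x} = \prot{\bm{x}} \bigr\}
=
\bigl(\#\{ x \in \mathbb{F}_2^\wbs : x = \rot[\rr]{x}\}\bigr)^k,
\]
so the whole problem collapses to counting, in a single $\wbs$-bit word, how many strings are invariant under left circular rotation by $\rr$.

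Next I would analyze the orbit structure of the rotation acting on the set of bit positions $\{0,1,\ldots,\wbs-1\}$, identified with $\mathbb{Z}/\wbs\mathbb{Z}$. Left rotation by $\rr$ is the permutation $i \mapsto i+\rr \pmod{\wbs}$, whose order in the symmetric group is the smallest $t>0$ with $t\rr \equiv 0 \pmod \wbs$, namely $t = \wbs/\gcd(\wbs,\rr)$. Since $\mathbb{Z}/\wbs\mathbb{Z}$ is a homogeneous space for this cyclic action, all orbits have this common length, and therefore the number of orbits is $\wbs/t = \gcd(\wbs,\rr)$.

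Then I would note that a word $x \in \mathbb{F}_2^\wbs$ satisfies $x = \rot[\rr]{x}$ precisely when its bit at position $i$ equals its bit at position $i+\rr \pmod \wbs$ for every $i$, i.e.\ when $x$ is constant on each orbit of the rotation. Each of the $\gcd(\wbs,\rr)$ orbits can independently be assigned bit value $0$ or $1$, yielding exactly $2^{\gcd(\wbs,\rr)}$ fixed words. Raising to the $k$-th power gives $2^{k\gcd(\wbs,\rr)}$, as claimed.

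There is no real obstacle here; the only point needing a hair of care is the equidistribution of orbit sizes under the cyclic action, which follows immediately from the fact that the acting group is cyclic and acts by translations on $\mathbb{Z}/\wbs\mathbb{Z}$.
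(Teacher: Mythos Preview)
Your argument is correct and follows essentially the same route as the paper: both reduce coordinate-wise to the single-word case and then count fixed words via the cycle structure of $i \mapsto i+\rr$ on $\mathbb{Z}/\wbs\mathbb{Z}$. The only difference is that you actually justify the cycle count $\gcd(\wbs,\rr)$ and the ``constant on orbits'' criterion, whereas the paper simply cites this as a well-known fact.
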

\begin{proof}
For each 
$\bm{x} = (x_1, \dots, x_k) 
\in 
\big(\mathbb{F}_2^\wbs\big)^k$ 
we have 
$\bm{x} = \prot{\bm{x}}$ 
if and only if 
$x_i = \rot{x_i}$ 
for each 
$i \in \{1,\dots,k\}$.
Hence, it is enough to show that
$
\#
\big\{
x \in \mathbb{F}_2^\wbs : x = \rot{x}
\big\} 
= 
2^{\gcd(\wbs, \rr)}
\,.
$
In turn, this is equivalent to the assertion that the permutation of $\mathbb{Z}/\wbs\mathbb{Z}$ 
given by $k \mapsto k + \rr$ 
has 
$\gcd(\wbs, \rr)$ cycles, 
which is a well-known fact.
\qed
\end{proof}
We can now compute the expected number of rotational collisions of a random permutation.
\begin{proposition}
Let ${\Pi}$ be a uniformly random variable in $\mathcal{S}\big((\mathbb{F}_2^\wbs)^k\big)$.
Then
\begin{equation*}
\mathbb{E}[C_{\Pi}] 
= 
\frac{2^{\wbs k} + 2^{2k \gcd(\wbs, \rr)} - 2^{k \gcd(\wbs, \rr) + 1}}{2^{\wbs k} - 1} .
\end{equation*}
\end{proposition}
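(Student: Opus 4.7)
The plan is to apply linearity of expectation, write
\[
\mathbb{E}[C_{\Pi}] = \sum_{\bm{x} \in (\mathbb{F}_2^\wbs)^k} \Pr[\Pi(\prot{\bm{x}}) = \prot{\Pi(\bm{x})}],
\]
and split the sum according to whether $\bm{x}$ is fixed by the rotation map or not. Set $N := 2^{\wbs k}$ and $F := 2^{k \gcd(\wbs,\rr)}$, so that by \autoref{lem:fixed} exactly $F$ vectors satisfy $\bm{x} = \prot{\bm{x}}$, while $N-F$ do not; observe also that, since the rotation acts coordinatewise in the same way on inputs and outputs, the set of $\bm{y}$ with $\bm{y} = \prot{\bm{y}}$ again has size $F$.

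For the first case, when $\bm{x} = \prot{\bm{x}}$, the condition $\Pi(\prot{\bm{x}}) = \prot{\Pi(\bm{x})}$ collapses to $\Pi(\bm{x}) = \prot{\Pi(\bm{x})}$, i.e.\ $\Pi(\bm{x})$ must lie in the $F$-element set of rotation-fixed outputs. Since $\Pi$ is uniform on $\mathcal{S}((\mathbb{F}_2^\wbs)^k)$, the image $\Pi(\bm{x})$ is uniform on $(\mathbb{F}_2^\wbs)^k$, and so this probability is $F/N$. The total contribution of the first case is therefore $F \cdot F/N = F^2/N$.

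For the second case, when $\bm{x} \neq \prot{\bm{x}}$, the two inputs $\bm{x}$ and $\prot{\bm{x}}$ are distinct, so $\Pi(\bm{x})$ and $\Pi(\prot{\bm{x}})$ must also be distinct; this forces $\Pi(\bm{x})$ \emph{not} to be rotation-fixed. The pair $(\Pi(\bm{x}), \Pi(\prot{\bm{x}}))$ is uniform over ordered pairs of distinct elements (of which there are $N(N-1)$), and the event demands that the second coordinate be the rotation of the first, which by the previous observation happens in exactly $N-F$ ways (one for each non-rotation-fixed choice of $\Pi(\bm{x})$). Thus the probability is $(N-F)/(N(N-1))$, and the total contribution of the second case is $(N-F)^2/(N(N-1))$.

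Adding the two contributions gives
\[
\mathbb{E}[C_{\Pi}]
= \frac{F^2}{N} + \frac{(N-F)^2}{N(N-1)}
= \frac{F^2(N-1) + (N-F)^2}{N(N-1)}
= \frac{N + F^2 - 2F}{N-1},
\]
after a direct algebraic simplification (the $F^2 N$ term from $F^2(N-1)$ combines with $F^2$ from $(N-F)^2$ and with $-2NF$, producing a common factor of $N$ that cancels). Substituting $N = 2^{\wbs k}$ and $F = 2^{k\gcd(\wbs,\rr)}$ yields the stated formula. The only genuinely delicate step is the second case, where one must resist the temptation to treat $\Pi$ as a random function: the permutation constraint is what rules out rotation-fixed values for $\Pi(\bm{x})$ and produces the $N(N-1)$ denominator, and getting this count correct is the crux of the argument.
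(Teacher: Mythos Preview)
Your proof is correct and follows essentially the same approach as the paper: both use linearity of expectation, split according to whether $\bm{x}$ (and its image) is fixed by the rotation, and invoke \autoref{lem:fixed} for the count $F=2^{k\gcd(\wbs,\rr)}$ of rotation-fixed vectors. The only difference is cosmetic---you phrase the case analysis probabilistically (uniform image for fixed $\bm{x}$, uniform ordered distinct pair for non-fixed $\bm{x}$), whereas the paper counts permutations via the quantities $N_{\bm{x},\bm{y}}$ across four cases, two of which vanish; the resulting sums $F^2/N+(N-F)^2/(N(N-1))$ and the final simplification coincide.
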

\begin{proof}
By the definition of expected value, we have
\begin{align*}
\mathbb{E}[C_{\Pi}] &= \frac1{\# \mathcal{S}\big((\mathbb{F}_2^\wbs)^k\big)} \sum_{{\Pi} \in \mathcal{S}((\mathbb{F}_2^\wbs)^k)} \#\big\{\bm{x} \in (\mathbb{F}_2^\wbs)^k : {\Pi}(\prot{\bm{x}}) = \prot{{\Pi}(\bm{x})}\big\} \\
&= \frac1{(2^{\wbs k})!} \sum_{{\Pi} \in \mathcal{S}((\mathbb{F}_2^\wbs)^k)} \sum_{\bm{x} \in (\mathbb{F}_2^\wbs)^k} \mathbbm{1} \big[{\Pi}(\prot{\bm{x}}) = \prot{{\Pi}(\bm{x})}\big] \\
&= \frac1{(2^{\wbs k})!} \sum_{\bm{x} \in (\mathbb{F}_2^\wbs)^k} \sum_{{\Pi} \in \mathcal{S}((\mathbb{F}_2^\wbs)^k)} \mathbbm{1} \big[{\Pi}(\prot{\bm{x}}) = \prot{{\Pi}(\bm{x})}\big] \\
&= \frac1{(2^{\wbs k})!} \sum_{\bm{x} \in (\mathbb{F}_2^\wbs)^k} \#\big\{{\Pi} \in \mathcal{S}\big((\mathbb{F}_2^\wbs)^k\big) : {\Pi}(\prot{\bm{x}}) = \prot{{\Pi}(\bm{x})}\big\} \\
&= \frac1{(2^{\wbs k})!} \sum_{\bm{x}, \bm{y} \in (\mathbb{F}_2^\wbs)^k} N_{\bm{x}, \bm{y}} ,
\end{align*}
where
$
N_{\bm{x}, \bm{y}} 
:= 
\#
\big\{
{\Pi} \in \mathcal{S}\big((\mathbb{F}_2^\wbs)^k\big) : {\Pi}(\bm{x}) = \bm{y} \land {\Pi}(\prot{\bm{x}}) = \prot{\bm{y}} 
\big\}
\,,
$
for every $\bm{x}, \bm{y} \in (\mathbb{F}_2^\wbs)^k$.
Hence, we have to compute $N_{\bm{x}, \bm{y}}$.
There are four cases:
\begin{enumerate}
\item If $\bm{x} = \prot{\bm{x}}$ and $\bm{y} = \prot{\bm{y}}$, 
      then $N_{\bm{x}, \bm{y}} = (2^{\wbs k} - 1)!$
\item If $\bm{x} \neq \prot{\bm{x}}$ and $\bm{y} \neq \prot{\bm{y}}$, 
      then $N_{\bm{x}, \bm{y}} = (2^{\wbs k} - 2)!$
\item If $\bm{x} \neq \prot{\bm{x}}$ and $\bm{y} = \prot{\bm{y}}$, 
      then $N_{\bm{x}, \bm{y}} = 0$
\item If $\bm{x} = \prot{\bm{x}}$ and $\bm{y} \neq \prot{\bm{y}}$,    
      then $N_{\bm{x}, \bm{y}} = 0$
\end{enumerate}
Consequently, using also \autoref{lem:fixed}, 
we get the claimed formula:
\begin{align*}
\mathbb{E}[C_{\Pi}] &= \frac1{(2^{\wbs k})!} \left(\sum_{\substack{\bm{x}, \bm{y} \in (\mathbb{F}_2^\wbs)^k 
\\ 
\bm{x} = \prot{\bm{x}},\; \bm{y} = \prot{\bm{y}}}} (2^{\wbs k} - 1)! + \sum_{\substack{\bm{x}, \bm{y} \in (\mathbb{F}_2^\wbs)^k 
\\ 
\bm{x} \neq \prot{\bm{x}},\; \bm{y} \neq \prot{\bm{y}}}} (2^{\wbs k} - 2)!\right) \\
&= \frac1{2^{\wbs k}} \sum_{\substack{\bm{x}, \bm{y} \in (\mathbb{F}_2^\wbs)^k \\ \bm{x} = \prot{\bm{x}},\; \bm{y} = \prot{\bm{y}}}} 1 + \frac1{2^{\wbs k}(2^{\wbs k}-1)}\sum_{\substack{\bm{x}, \bm{y} \in (\mathbb{F}_2^\wbs)^k \\ \bm{x} \neq \prot{\bm{x}},\; \bm{y} \neq \prot{\bm{y}}}} 1 \\
&= \frac1{2^{\wbs k}} \cdot 2^{2k \gcd(\wbs, \rr)} + \frac1{2^{\wbs k}(2^{\wbs k}-1)} \cdot \big(2^{\wbs k} - 2^{k\gcd(\wbs, k)}\big)^2 \\
&= \frac{2^{\wbs k} + 2^{2k \gcd(\wbs, \rr)} - 2^{k \gcd(\wbs, \rr) + 1}}{2^{\wbs k} - 1} 
\,.
\end{align*}
\qed
\end{proof}

For $\wbs=32$, $\std=4$, and $\rr=1$, then 
$\mathbb{E}[C_{\Pi}]$ is basically 1.
As a consequence, for a random permutation $\Pi$ with a fixed point, 
then $\mathbb{E}[C_{\Pi}]$ is basically 2.

\subsection{ChaCha permutation vs random permutation}
\label{sec:chacha_vs_random}

In \autoref{tab:bound_propagation}, 
we display the lower and upper bounds 
of \autoref{sec:bounds_full_permutation}, 
\autoref{thm:bounds_many_rounds}, 
for $\wbs=32$, $\std=4$, $\rr=1$, and rounds from 1 to 20.
As we showed in \autoref{sec:collisions_random_permutation}, 
for a random permutation 
$\Pi \in \mathcal{S}(\FF_2^{\std\wbs})$ with one fixed point, 
a rotational collision happens with probability
very close to $2/2^{\std\wbs}$.
In the case of ChaCha parameters this probability is $1/2^{511}$.
Thus, we can build a distinguisher $\mathcal{A}$ 
with access to an oracle $\oracle$
running either $\chachaperm$ with $\rho$ rounds or $\Pi$.
Let $\lb^{\std i}$ and $\ub^{\std i}$ be, respectively, 
the upper and lower bound of \autoref{thm:bounds_many_rounds}, 
with $i=1,\ldots,\rho$. 
The algorithm $\mathcal{A}$ runs as follow:
generate binary strings 
$X_i \in \FF_2^{\std\wbs}$ 
for 
$i = 1, \ldots, 
\lceil 1/\ub^{\std i} \rceil$;
ask the oracle the corresponding output 
$Y_i = \oracle(X_i)$;
if there exists $i$ such that 
$\prot{Y_i} = \oracle(\prot{X_i})$  
then the algorithm says the oracle is running $\chachaperm$.
If such $i$ does not exists, 
then the oracle is running $\Pi$.

The complexity of the algorithm $\mathcal{A}$ 
is dominated by the $2 \lceil 1 / \ub^{\std i} \rceil$ 
calls to the oracle. 
For example, to distinguish $\chachaperm$ with 8 rounds, 
$\mathcal{A}$ performs $2^{231}$ calls to the oracle, 
while for $\chachaperm$ with 17 rounds, 
the calls are $2^{489}$.
After the 17th round, $\mathcal{A}$ can not distinguish $\chachaperm$ from $\Pi$ anymore.

\begin{table}[ht]
    \centering
    \resizebox{\textwidth}{!}{
    \begin{tabular}{c|l|l||c|l|l}
Round & Lower Bound $\lb^{\std i}$ & Upper Bound $\ub^{\std i}$ & 
Round & Lower Bound $\lb^{\std i}$ & Upper Bound $\ub^{\std i}$
\\
\hline
  1 & $\sim 2^{-27.32}$  & $\sim 2^{-28.68}$  & 11 & $\sim 2^{-300.52}$ & $\sim 2^{-315.48}$ \\
  2 & $\sim 2^{-54.64}$  & $\sim 2^{-57.36}$  & 12 & $\sim 2^{-327.84}$ & $\sim 2^{-344.16}$ \\
  3 & $\sim 2^{-81.96}$  & $\sim 2^{-86.04}$  & 13 & $\sim 2^{-355.16}$ & $\sim 2^{-372.84}$ \\
  4 & $\sim 2^{-109.28}$ & $\sim 2^{-114.72}$ & 14 & $\sim 2^{-382.48}$ & $\sim 2^{-401.52}$ \\
  5 & $\sim 2^{-136.60}$ & $\sim 2^{-143.40}$ & 15 & $\sim 2^{-409.80}$ & $\sim 2^{-430.20}$ \\
  6 & $\sim 2^{-163.92}$ & $\sim 2^{-172.08}$ & 16 & $\sim 2^{-437.12}$ & $\sim 2^{-458.88}$ \\
  7 & $\sim 2^{-191.24}$ & $\sim 2^{-200.76}$ & 17 & $\sim 2^{-464.45}$ & $\sim 2^{-487.55}$ \\
  8 & $\sim 2^{-218.56}$ & $\sim 2^{-229.44}$ & 18 & $\sim 2^{-491.77}$ & $\sim 2^{-516.23}$ \\
  9 & $\sim 2^{-245.88}$ & $\sim 2^{-258.12}$ & 19 & $\sim 2^{-519.09}$ & $\sim 2^{-544.91}$ \\
 10 & $\sim 2^{-273.20}$ & $\sim 2^{-286.80}$ & 20 & $\sim 2^{-546.41}$ & $\sim 2^{-573.59}$ \\
    \end{tabular}
    }
    \caption{Bounds propagation through ChaCha rounds with $\wbs=32$, $\std=4$, and $\rr=1$.}
    \label{tab:bound_propagation}
\end{table}

\section{Conclusion}
\label{sec:conclusion}

We showed that parallel rotational collisions are more likely to happen in ChaCha underlying permutation with up to 17 rounds, than in a random permutation of the same input size. We are not aware of any theoretical study of ChaCha rotational properties, and we leave to future research finding an application of our results to the cryptanalysis of ChaCha stream cipher.


\bibliographystyle{splncs04}
\bibliography{biblio}

\end{document}